\date{\today}
\newcommand\bC{{\mathbb C}}
\newcommand\fa{{\mathfrak a}}
\newcommand\fg{{\mathfrak g}}
\newcommand\fh{{\mathfrak h}}
\newcommand\frk{{\mathfrak k}}
\newcommand\fp{{\mathfrak p}}
\newcommand\ft{{\mathfrak t}}
\newtheorem{theorem}{Theorem}[section]
\newtheorem{algorithm}[theorem]{Algorithm}
\newtheorem{corollary}[theorem]{Corollary}
\newtheorem{definition}[theorem]{Definition}
\newtheorem{example}[theorem]{Example}
\newtheorem{lemma}[theorem]{Lemma}
\newtheorem{proposition}[theorem]{Proposition}
\begin{document}
\title[Scattered Representations]{Scattered representations of $SL(n, \bC)$}
\author{Chao-Ping Dong}
\author{Kayue Daniel Wong}

\address[Dong]{School of Mathematical Sciences, Soochow University, Suzhou 215006,
P.~R.~China}
\email{chaopindong@163.com}

\address[Wong]{School of Science and Engineering, The Chinese University of Hong Kong, Shenzhen,
Guangdong 518172, P. R. China}
\email{kayue.wong@gmail.com}

\begin{abstract}
Let $G$ be $SL(n, \bC)$. The unitary dual $\widehat{G}$ was classified by Vogan in the 1980s. This paper aims to describe the Zhelobenko parameters and the spin-lowest $K$-types of the scattered representations of $G$,
which lie at the heart of $\widehat{G}^d$---the set of all the equivalence classes of irreducible unitary representations of $G$ with non-vanishing Dirac cohomology.
As a consequence, we will verify a couple of conjectures of the first-named author for $G$.
\end{abstract}

\maketitle
\setcounter{tocdepth}{1}

\section{Introduction}\label{sec:intro}
\subsection{Preliminaries on complex simple Lie groups}
Let $G$ be a complex connected simple Lie group, and $H$ be a Cartan subgroup of $G$. Let $\fg_0$ and $\fh_0$ be the Lie algebra of $G$ and $H$ respectively, and we drop the subscripts to stand for the complexified Lie algebras. We adopt a positive root system $\Delta^+(\fg_0, \fh_0)$, and let $\varpi_1, \dots, \varpi_{\mathrm{rank}(\fg_0)}$ be the corresponding fundamental weights with $\rho=\varpi_1+\cdots+\varpi_{\mathrm{rank}(\fg_0)}$ being the half sum of positive roots.

Fix a Cartan involution $\theta$ on $G$ such that its fixed points form a maximal compact subgroup $K$ of $G$. Then on the Lie algebra level, we have the Cartan decomposition
$$
\fg_0=\frk_0+\fp_0.
$$

We denote by $\langle\cdot, \cdot\rangle$ the Killing form on $\fg_0$. This form is negative definite on $\frk_0$ and positive definite on $\fp_0$. Moreover, $\frk_0$ and $\fp_0$ are orthogonal to each other under $\langle\cdot, \cdot\rangle$. We shall denote by $\|\cdot\|$ the norm corresponding to the Killing form.

Let $H=TA$ be the Cartan decomposition of $H$, with $\fh_0=\ft_0+\fa_0$. We make the following identifications:
\begin{equation}\label{identifction}
\fh\cong \fh_0\times \fh_0, \quad \ft=\{(x, -x): x\in\fh_0\}, \quad \fa\cong\{(x, x): x\in \fh_0\}.
\end{equation}
Take an arbitrary pair $(\lambda_L, \lambda_R)\in \fh_0^*\times \fh_0^*$ such that $\mu:=\lambda_L-\lambda_R$ is integral. Denote by $\{\mu\}$ the unique dominant weight to which $\mu$ is conjugate under the action of the Weyl group $W$. Write $\nu:=\lambda_L + \lambda_R$. We can view $\mu$ as a weight of $T$ and $\nu$ a character of $A$. Put
$$
I(\lambda_L, \lambda_R):={\rm Ind}_B^G(\bC_{\mu}\otimes \bC_{\nu}\otimes {\bf 1})_{K-{\rm finite}},
$$
where $B$ is the Borel subgroup of $G$ determined by $\Delta^+(\fg_0, \fh_0)$. It is not hard to show that $V_{\{\mu\}}$, the $K$-type with highest weight $\{\mu\}$, occurs exactly once in $I(\lambda_L, \lambda_R)$. Let $J(\lambda_L, \lambda_R)$ be the unique irreducible subquotient of $I(\lambda_L, \lambda_R)$ containing $V_{\{\mu\}}$. By \cite{Zh}, every irreducible admissible $(\fg, K)$-module has the form $J(\lambda_L, \lambda_R)$. Indeed, up to equivalence, $J(\lambda_L, \lambda_R)$ is the unique irreducible admissible $(\fg, K)$-module with infinitesimal character the $W \times W$ orbit of $(\lambda_L, \lambda_R)$, and lowest $K$-type $V_{\{\lambda_L-\lambda_R\}}$. We will refer to the pair $(\lambda_L, \lambda_R)$ as the {\it Zhelobenko parameter} for the module $J(\lambda_L, \lambda_R)$.

\subsection{Dirac cohomology}
Fix an orthonormal basis $Z_1, \dots, Z_l$ of $\fp_0$ with respect to the inner product on $\fp_0$ induced by $\langle\cdot, \cdot\rangle$. Let $U(\fg)$ be the universal enveloping algebra of $\fg$, and put $C(\fp)$ as the Clifford algebra of $\fp$. One checks that
\begin{equation}\label{Dirac-operator}
D:=\sum_{i=1}^{l} Z_i\otimes Z_i \in U(\fg)\otimes C(\fp)
\end{equation}
is independent of the choice of the orthonormal basis $Z_1, \dots, Z_l$. The operator $D$, called the \textit{Dirac operator}, was introduced by Parthasarathy \cite{P1}. By construction, $D^2$ is a natural Laplacian on $G$, which gives rise to the Parthasarathy's Dirac inequality (see \eqref{Dirac-inequality} below). The inequality is very effective for detecting non-unitarity of $(\fg,K)$-modules, but is by no means sufficient to classify all (non-)unitary modules.

To sharpen the Dirac inequality, and to offer a better understanding of the unitary dual, Vogan formulated the notion of Dirac cohomology in 1997 \cite{V2}. Let ${\rm Ad}: K\rightarrow SO(\fp_0)$ be the adjoint map, ${\rm Spin}\ \fp_0$ be the spin group of $\fp_0$, and denote by $p: {\rm Spin}\ \fp_0\rightarrow SO(\fp_0)$ the spin double covering map. Put
$$
\widetilde{K}:=\{(k,s)\in K\times {\rm Spin} \, \fp_0\mid {\rm Ad}(k)=p(s)\}.
$$
As in the case of $K$-types, we will refer to an irreducible $\widetilde{K}$-type with highest weight $\delta$ as $V_{\delta}$.

Let $\pi$ be any admissible $(\fg, K)$-module, and $S$ be the spin module of $C(\fp)$.  Then $U(\fg)\otimes C(\fp)$, in particular the Dirac operator $D$, acts on $\pi\otimes S$. Now the \textit{Dirac cohomology} is defined as the $\widetilde{K}$-module
\begin{equation}\label{Dirac-cohomology}
H_D(\pi):={\rm Ker} D/({\rm Ker} D \cap {\rm Im}  D).
\end{equation}
It is evident from the definition that Dirac cohomology is an invariant for admissible $(\fg, K)$-modules. To compute this invariant, the Vogan conjecture, proved by Huang and Pand\v{z}i\'{c} \cite{HP1}, says that whenever $H_D(\pi) \neq 0$, one would have
\begin{equation}\label{thm-HP}
\gamma+\rho=w\Lambda,
\end{equation}
where $\Lambda$ is the infinitesimal character of $\pi$, $\gamma$ is the highest weight of any $\widetilde{K}$-type in $H_D(\pi)$, and $w$ is some element of $W$.

It turns out that many interesting $(\fg,K)$-modules $\pi$, such as some $A_q(\lambda)$-modules and all the highest weight modules, have non-zero Dirac cohomology (see \cite{HKP}, \cite{HPP}). One would therefore like to classify all representations with non-zero Dirac cohomology.

\subsection{Spin-lowest $K$-type}
From now on, we set $\pi$ as an irreducible unitary $(\fg, K)$-module with infinitesimal character $\Lambda$.
In order to get a clearer picture on $H_D(\pi)$, the first-named author introduced the notion of spin-lowest $K$-types.
Given an arbitrary $K$-type $V_{\delta}$, its spin norm is defined as
\begin{equation}\label{spin-norm}
\|\delta\|_{\rm spin}:=\|\{\delta-\rho\}+\rho\|.
\end{equation}
Then a $K$-type $V_{\tau}$ occurring in $\pi$ is called a \textit{spin-lowest $K$-type} of $\pi$ if it achieves the minimum spin norm among all the $K$-types showing up in $\pi$.

As an application of spin-lowest $K$-type, note that $D$ is self-adjoint on the unitarizable module $\pi\otimes S$. By writing out $D^2$ carefully, and by using the \emph{PRV-component} \cite{PRV}, we can rephrase \textit{Parthasarathy's Dirac operator inequality} \cite{P2} as follows:
\begin{equation}\label{Dirac-inequality}
\|\delta\|_{\rm spin}\geq \|\Lambda\|,
\end{equation}
where $V_{\delta}$ is any $K$-type. Moreover, one can deduce from \cite[Theorem 3.5.3]{HP2} that $H_D(\pi)\neq 0$ if and only if the spin-lowest $K$-types $V_{\tau}$ attain the lower bound of Equation \eqref{Dirac-inequality}. In such cases, $V_{\{\tau - \rho\}}$ will show up in $H_D(\pi)$. Put in a different way, the spin-lowest $K$-types of $\pi$ are exactly the $K$-types contributing to $H_D(\pi)$  whenever the cohomology is non-vanishing (see Proposition 2.3 of \cite{D1} for more details).

\subsection{Scattered representations} \label{scattered}
Based on the studies \cite{BP,DD}, we are interested in the  irreducible unitarizable  $(\fg, K)$-modules $J(\lambda, -s\lambda)$ such that
\begin{itemize}
\item[(i)] the weight 2$\lambda$ is dominant integral, i.e., $2\lambda=\sum_{i=1}^{\mathrm{rank}(\fg_0)}c_i\varpi_i$, where each $c_i$ is a positive integer;

\item[(ii)] the element $s\in W$ is an involution such that each simple reflection $s_i$, $1\leq i\leq \mathrm{rank}(\fg_0)$, occurs in one (thus in each) reduced expression of $s$;

\item[(iii)] the module has non-zero Dirac cohomology, i.e., $H_D(J(\lambda, -s\lambda))\neq 0$, or equivalently, there exists
a $K$-type $V_{\tau}$ in $J(\lambda, -s\lambda)$ such that
\begin{equation} \label{spin=2lambda}
\|\tau\|_{\rm spin} = \|(\lambda, -s\lambda)\| = \|2\lambda\|
\end{equation}
\end{itemize}

According to \cite{DD}, there are only finitely many such representations, which are called the \textit{scattered representations}.

These representations lie at the heart of $\widehat{G}^d$ --- the set of all the irreducible unitary $(\fg, K)$-modules of $G$ with non-zero Dirac cohomology up to equivalence. Namely, by Theorem A of \cite{DD}, any member of $\widehat{G}^d$ is either a scattered representation, or it is cohomologically induced from a scattered representation tensored with a suitable unitary character of the Levi factor of a certain proper $\theta$-stable parabolic subgroup. In the latter case, one can easily trace the spin-lowest $K$-types along with the Dirac cohomology of the modules before and after induction.
It is therefore of interest to have a good understanding of scattered representations.

\subsection{Overview}
In this manuscript, we focus on Lie groups $G$ of Type $A$. For convenience, we will start from the group $GL(n, \bC)$, written as $GL(n)$ for short. In this case, Vogan classified the unitary dual. The part that we need can be described as follows.

\begin{theorem}[\cite{V1}] \label{unitary}
All irreducible unitary representations of $GL(n)$ with regular half-integral infinitesimal
characters are parabolically induced from a unitary character, i.e. they are of the form
$${\rm Ind}_{(\prod_{i=0}^m GL(a_i))U}^{GL(n)}(\bigotimes_{i=0}^m {\det}^{p_i} \otimes \mathbf{1})$$
for some $a_i \in \mathbb{Z}_{>0}$ and $p_i \in \mathbb{Z}$. For simplicity, we will write the parabolically
induced module ${\rm Ind}_{LU}^{G}(\pi \otimes \mathbf{1})$ as ${\rm Ind}_{L}^{G}(\pi)$ for the rest of the manuscript.
\end{theorem}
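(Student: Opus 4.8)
The statement is the specialisation of Vogan's classification of $\widehat{GL(n,\bC)}$ \cite{V1} to regular half-integral infinitesimal character, so the plan is to recall that classification and then eliminate every ``building block'' incompatible with half-integrality. First I would recall the classification in the form: every irreducible unitary $(\fg,K)$-module of $GL(n,\bC)$ is a unitarily parabolically induced module
\[ \pi=\Ind_{\prod_i GL(a_i)}^{GL(n,\bC)}\Big(\textstyle\bigotimes_i\sigma_i\Big), \]
in which each $\sigma_i$ is a \emph{basic} unitary representation of $GL(a_i,\bC)$: either (a) a one-dimensional unitary character $\det^{p}|\det|^{i\tau}$ with $p\in\bZ$, $\tau\in\bR$ -- these are exactly the Speh blocks $u(\chi,k)$ in Vogan's notation, which over $\bC$ are all one-dimensional because $GL(m,\bC)$ has square-integrable representations only for $m=1$; or (b) a complementary series $u(\chi_k,k;t)=\Ind_{GL(k)\times GL(k)}^{GL(2k,\bC)}\big(\chi_k|\det|^{t}\otimes\chi_k|\det|^{-t}\big)$ with $\chi_k$ a unitary character of $GL(k,\bC)$ and $0<t<\tfrac12$.

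Next I would pass to the inducing factors. The infinitesimal character of a parabolically induced module is the union, as a multiset of Harish-Chandra parameters, of those of its inducing factors; hence if $\pi$ has half-integral infinitesimal character -- i.e.\ a Zhelobenko parameter $(\lambda_L,\lambda_R)$ with all coordinates in $\tfrac12\bZ$ -- then so does each $\sigma_i$, and in particular the infinitesimal character of every $\sigma_i$ is \emph{real}.

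Then I would rule out the two continuous parameters. For a unitary character of $GL(a,\bC)$, the infinitesimal character is that of $\det^{p}$ -- which is regular with all coordinates in $\tfrac12\bZ$, being on each $\fh_0^*$-factor the $\rho$ of $GL(a,\bC)$ shifted by a multiple of $(1,\dots,1)$ -- further shifted by the purely imaginary translation $\tfrac{i\tau}{2}(1,\dots,1)$ on each factor; so it is real precisely when $\tau=0$, i.e.\ precisely when the character is $\det^{p}$ with $p\in\bZ$. For a complementary series $u(\chi_k,k;t)$, the infinitesimal character consists of two copies of that of the unitary character $\chi_k$ of $GL(k,\bC)$, translated by $+t$ and by $-t$ along $(1,\dots,1)$; since the $\chi_k$-part is already $\tfrac12\bZ$-valued up to an imaginary shift, half-integrality of the total forces that shift to vanish and, reading off a single coordinate, forces $t\in\tfrac12\bZ$ -- impossible for $0<t<\tfrac12$. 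Consequently no complementary series factor can occur, and each $\sigma_i$ is a unitary character $\det^{p_i}$ of $GL(a_i,\bC)$ with $p_i\in\bZ$, which is exactly the asserted form of $\pi$.

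The regularity hypothesis is used to guarantee that $\pi$ is determined by its (regular) infinitesimal character among the Langlands/Zhelobenko data and that the displayed induced module is itself irreducible, so the identification is a genuine equality of modules; it additionally imposes a disjointness condition on the Harish-Chandra parameters of the factors, but does not otherwise change the shape of the answer. The hard part -- really the only place work is needed -- is the arithmetic of the third paragraph: one must carry along the normalisations relating Vogan's parameters $(\chi,k,t)$ to the Harish-Chandra infinitesimal character and check that $\tau$ and $t$ are genuinely obstructed. This works precisely because the inducing datum of a complementary series is required to be \emph{unitary}, which pins the real part of its infinitesimal character to $\tfrac12\bZ$-coordinates, so that a further translation by $t\in(0,\tfrac12)$ cannot remain half-integral. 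Everything else is bookkeeping against Vogan's list.
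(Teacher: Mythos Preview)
The paper does not supply a proof of this statement at all; it is quoted as a known result of Vogan \cite{V1} and used as input, with no argument beyond the citation. Your proposal is therefore not competing with any proof in the paper, and as a reconstruction of why the statement follows from Vogan's full classification of $\widehat{GL(n,\bC)}$ it is correct: one lists the building blocks (one-dimensional unitary characters and Stein complementary series), pushes the half-integrality of the infinitesimal character down to each inducing factor, and then observes that the real continuous parameters $\tau$ and $t$ are forced out of their admissible ranges, leaving only $\det^{p_i}$ with $p_i\in\bZ$. Your remark that regularity is what guarantees irreducibility of the induced module (so that the displayed expression really is the irreducible representation in question) is also the right use of that hypothesis. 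The only caution is to match normalisations: in the paper ``half-integral'' means $2\lambda$ is dominant integral in the sense of Section~\ref{scattered}(i), and the chain description at the start of Section~2 makes explicit that the coordinates of $2\lambda$ for $\det^{p}$ are integers, so the elimination of $t\in(0,\tfrac12)$ goes through exactly as you say.
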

Using  \cite[Theorem 2.4]{BP}, all such $\pi$ have non-zero Dirac cohomology. Moreover, \cite{BDW} proved Conjecture 4.1 of \cite{BP}, which says
$$H_D(\pi) = 2^{[\frac{\mathrm{rank}(\fg_0)}{2}]}V_{\{\tau - \rho\}},$$
where $V_{\tau}$ is the {\it unique} spin-lowest $K$-type appearing in $\pi$ {\it with multiplicity one}. However, it
is not clear what $V_{\tau}$ is like from the calculations in \cite{BDW}.

In Section 2, we will give an algorithm to compute $V_{\tau}$ for all such $\pi$ (see Proposition \ref{prop-spin-lowest}).  In Section 3, we will see how the calculations for $GL(n)$ in Section 2 can be translated to $SL(n)$, which gives a combinatorial description of scattered representations of $SL(n)$ (Proposition \ref{prop:scattered}). As a result, we prove the following:
\begin{itemize}
\item The spin-lowest $K$-type of each scattered representation of $SL(n)$ is {\it unitarily small} in the sense of Salamanca-Riba and Vogan \cite{SV} (Corollary \ref{cor-u-small}); and
\item The number of scattered representations of $SL(n)$ is equal to $2^{n-2}$ (Corollary \ref{cor-number}).
\end{itemize}
This verifies Conjecture C of \cite{DD} in the case of $SL(n)$, and proves Conjecture 5.2 of \cite{D2} respectively.

It is worth noting that for any non-trivial scattered representation,  its spin-lowest $K$-type lives deeper than, and differs from the lowest $K$-type. We hope the effort here will shed some light on the real case in future.


\section{An algorithm computing the spin-lowest $K$-types}
In this section, we give an algorithm to find the spin-lowest $K$-types of
the irreducible unitary modules of $GL(n)$ given by Theorem \ref{unitary}. We use a {\bf chain}
$$\mathcal{C} := \{c, c-2 \dots, c-(2k-2), c-2k\},$$
where $c, k \in \mathbb{Z}$ with $k >0$, to denote the Zhelobenko parameter
$$\begin{pmatrix} \lambda \\ -w_0\lambda \end{pmatrix} = \begin{pmatrix}
\frac{c}{2} & \frac{c}{2} - 1& \dots & \frac{c}{2} - (k-1) & \frac{c}{2} - k \\
-\frac{c}{2} + k & -\frac{c}{2} + (k-1) & \dots & -\frac{c}{2} + 1 & -\frac{c}{2} \\
\end{pmatrix}.$$

Note that the entries of $\mathcal{C}$ are precisely equal to $2\lambda$. Also,
this parameter corresponds to the one-dimensional module ${\det}^{c-k}$ of $GL(k+1)$.
Consequently, Theorem \ref{unitary} implies that the Zhelobenko parameters of all irreducible unitary modules
with regular half-integral infinitesimal character can be expressed by the chains
$$(\lambda,-s\lambda) = \bigcup_{i=0}^m \mathcal{C}_i,$$
where all the entries of $\mathcal{C}_i$ are disjoint.

In order to understand the spin-lowest $K$-types of these modules of $GL(n)$, we make
the following:
\begin{definition}
\begin{itemize}
\item[(a)] Two chains $\mathcal{C}_1 = \{A, \dots, a\}$, $\mathcal{C}_2 = \{B, \dots, b\}$ are {\bf linked} if
the entries of $\mathcal{C}_1$ and $\mathcal{C}_2$ are disjoint satisfying
$$A > B > a \ \ \ \ \text{or} \ \ \ \ B > A > b.$$
\item[(b)] We say a union of chains  $\displaystyle \bigcup_{i \in I} \mathcal{C}_i$ is {\bf interlaced} if for all $i \neq j$ in $I$,
there exist indices $i = i_0, i_1, \dots, i_m = j$ in $I$ such that $\mathcal{C}_{i_{l-1}}$ and
$\mathcal{C}_{i_{l}}$ are linked for all $1 \leq l \leq m$.
(By convention, we also let the single chain $\mathcal{C}_1$ be interlaced).
\end{itemize}
\end{definition}

For example, the parameter $\{9,7,5\} \cup \{6,4,2\} \cup \{3,1\}$
is interlaced, while the parameter $\{10,8\} \cup \{9,7\} \cup \{6,4\} \cup \{5,3,1\}$
is not interlaced.

We are now in the position to describe the spin-lowest $K$-types of
the unitary modules in Theorem \ref{unitary} using chains.
\begin{algorithm} \label{alg:spinlkt}
Let $J(\lambda,-s\lambda)$ be an irreducible unitary module of $GL(n)$ in Theorem \ref{unitary} with $(\lambda,-s\lambda) = \bigcup_{i=0}^m \mathcal{C}_i$, where
$$\mathcal{C}_i := \{k_i + (d_i -1), \dots, k_i - (d_i - 1)\} = \{C_{i,1},\dots,C_{i,d_i}\}$$
is a chain with average value $k_i$ and length $d_i$. Then the lowest $K$-type is equal to (a $W$-conjugate of) $(\mathcal{T}_0, \dots, \mathcal{T}_m)$, where
$$\mathcal{T}_i := (\underbrace{k_i, \dots, k_i}_{d_i}).$$
By re-indexing the chains when necessary, we may and we will assume that
\begin{equation} \label{eq:order}
\mbox{ for any }\ 0\leq i<j\leq m, \quad k_i > k_j \ \ \text{or}\ \  d_i < d_j\ \text{if}\ \ k_i = k_j.
\end{equation}

Let us
change the coordinates of $\mathcal{T}_i$ and $\mathcal{T}_j$ for all pairs of linked chains
$\mathcal{C}_i$ and $\mathcal{C}_j$ such that $i<j$ by the following rule:
\begin{itemize}
\item[(a)] If $C_{i,1} > C_{j,1} \geq C_{j,d_j} > C_{i,d_i}$, i.e.
\begin{align*}
\{C_{i,1},\ \ \dots,\ \ C_{i,d_i-p}&,\ \  \overbrace{C_{i,d_i-p+1},\ \dots \dots,\ \ C_{i,d_i}}^{p} \} \\
&\{C_{j,1}, \ \ \dots,\ \  C_{j,d_j}\}
\end{align*}
with $C_{j,1} = C_{i,d_i} + 2p-1$ and $d_j \leq p$, then we change the coordinates of $\mathcal{T}_i$
and $\mathcal{T}_j$ into:
\[\boxed{\begin{aligned}
\mathcal{T}_i' &: (*,\ \ \dots,\ \ *,\ \overbrace{k_i+p,\ k_i+(p-1),\ \dots,\ k_i+(p - d_j + 1),\ * ,\ \dots,\ *}^{p} ) \\
\mathcal{T}_j' &: \ \ \ \ \ \ \ \ \ \ \ \ (k_j - p,\ k_j - (p-1),\ \dots,\ k_j-(p - d_j+1)),
\end{aligned}}\]
where the entries marked by $*$ remain unchanged.

\smallskip

\item[(b)] If $C_{i,1} > C_{j,1} > C_{i,d_i} > C_{j,d_j}$, i.e.
\begin{align*}
\{C_{i,1},\ \ \dots,\ \ C_{i,d_i-p}&,\ \  \overbrace{C_{i,d_i-p+1},\ \ \dots,\ \ C_{i,d_i}}^{p} \} \\
&\{C_{j,1}, \ \ \  \dots\dots,\ \ \ C_{j,p},\ \ \ \ C_{j,p+1},\ \ \dots, \ \  C_{j,d_j}\}
\end{align*}
with $C_{j,1} = C_{i,d_i} + 2p-1$ and $d_j > p$, then we change the coordinates of $\mathcal{T}_i$
and $\mathcal{T}_j$ into:
\[\boxed{\begin{aligned}
\mathcal{T}_i' &: (*,\dots,\ *,\ \overbrace{k_i+1,\ \dots,\ k_i+p}^{p} ) \\
\mathcal{T}_j' &: \ \ \ \ \ \ \ \ \ \ (k_j-1, \ \dots,\ k_j-p,\ *,\ \ \dots, \ \  *).
\end{aligned}}\]
where the entries marked by $*$ remain unchanged.

\item[(c)] If $C_{j,1} > C_{i,1} > C_{j,d_j}$, then since $k_i \geq k_j$ one also have
$C_{j,1} > C_{i,1} \geq C_{i,d_i} > C_{j,d_j}$
i.e.
\begin{align*}
\{C_{i,1}, \ \ \dots ,\ \ &C_{i,d_i}\} \\
\{\underbrace{C_{j,1},\ \ \ \ \ \ \dots, \ \ \ \ \ \   C_{j,q}}_{q},&\ \ \ \ \ \ C_{j,q+1}, \ \ \dots, \ \ C_{j,d_j}\}
\end{align*}
with $C_{j,1} = C_{i,d_i} + 2q-1$, then we change the coordinates of $\mathcal{T}_i$ and $\mathcal{T}_j$ into:
\[\boxed {\begin{aligned}
\mathcal{T}_i'&: \ \ \ \ \ \ \ \ \ \ \ \ \ \ \ \ \  (k_i + (q-d_0+1),\ \dots,\ k_i + (q-1),\ k_i + q) \\
\mathcal{T}_j'&: (\underbrace{*,\ \dots,\ *,\ k_j - (q-d_0+1),\ \dots,\ k_j - (q-1),\ k_j -q}_{q},\ *, \ \ \dots, \ \ *),
\end{aligned}}\]
where the entries marked by $*$ remain unchanged.
\end{itemize}
In the above three cases, we only demonstrate the situation that $\mathcal{C}_i$ is in the first row and $\mathcal{C}_j$ is in the second row. The rule is the same when $\mathcal{C}_j$ is in the first row while $\mathcal{C}_i$ is in the second row.

After running through all pairs of
linked chains, $V_{\tau}$ is defined as the $K$-type with highest weight $\tau$ given by (a $W$-conjugate of)
$\bigcup_{i=0}^m \mathcal{T}_i'$.
\end{algorithm}

\begin{example}  \label{eg:spinlowest}
Consider $(\lambda,-s\lambda) = \begin{aligned}
\{10 && && 8\} && && \{6\} && && \{4\} \\
&& \{9 && && 7 && && 5 && && 3 && && 1\}
\end{aligned}$. Then  the lowest $K$-type of $J(\lambda,-s\lambda)$ is
\begin{align*}
(9 && && 9) && && (6) && && (4) \\
&& (5 && && 5 && && 5 && && 5 && && 5)
\end{align*}

To compute $V_{\tau}$, let us label the chains so that \eqref{eq:order} holds:
$$
\mathcal{T}_0=(9 \quad 9),\quad \mathcal{T}_1=(6), \quad \mathcal{T}_2=(5 \quad 5\quad  5 \quad 5 \quad 5),\quad \mathcal{T}_3=(4).
$$
Then we apply (a) to the pair $\mathcal{T}_2$, $\mathcal{T}_3$, apply (b) to the pair $\mathcal{T}_0$, $\mathcal{T}_2$, and apply (c) to the pair $\mathcal{T}_1$, $\mathcal{T}_2$. This gives us
\begin{align*}
(9 && && 10) && && (8) && && (2) \\
&& (4 && && 3 && && 5 && && 7 && && 5).
\end{align*}
Thus $\tau = (10,9,8,7,5,5,4,3,2)$.
\end{example}

\begin{theorem}
Let $J(\lambda,-s\lambda)$ be a unitary module of $GL(n)$ in Theorem \ref{unitary}, and $V_{\tau}$ be obtained
by Algorithm \ref{alg:spinlkt}. Then $[J(\lambda,-s\lambda):V_{\tau}] > 0$.
\end{theorem}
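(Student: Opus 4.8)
The plan is to convert the assertion into the non-vanishing of an (iterated) Littlewood--Richardson coefficient and then to certify that non-vanishing by producing one explicit chain of partitions read off from Algorithm~\ref{alg:spinlkt}. By the discussion following Theorem~\ref{unitary}, $J:=J(\lambda,-s\lambda)$ is the induced module $\Ind_{L}^{G}(\sigma)$ --- irreducibly so, this being part of the content of Theorem~\ref{unitary} --- where $L=\prod_{i=0}^m GL(d_i)$ and $\sigma=\bigotimes_{i=0}^m {\det}^{k_i}$ is the one-dimensional unitary character of $L$ whose $K$-type is the lowest $K$-type $\bigcup_i\mathcal{T}_i$ of $J$. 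Writing $P=LU$, we have $G=KP$ and $K\cap P=L\cap K=\prod_i U(d_i)$ (since $K\cap U=\{1\}$ for a complex group), so restriction to $K=U(n)$ gives $J|_K\cong\Ind_{\prod_i U(d_i)}^{U(n)}\bigl(\bigotimes_i{\det}^{k_i}\bigr)$. By Frobenius reciprocity, $[J:V_\tau]$ is the multiplicity of $\bigotimes_i{\det}^{k_i}$ in $V_\tau|_{\prod_i U(d_i)}$; after tensoring every piece by a common power ${\det}^N$ (which changes nothing), the classical $GL$-branching rule identifies this with the Littlewood--Richardson coefficient $c^{\,\tau+(N^n)}_{\,((k_0+N)^{d_0}),\dots,((k_m+N)^{d_m})}$. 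So the theorem is equivalent to the non-vanishing of this coefficient.

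Each $((k_i+N)^{d_i})$ is a rectangle, so the rectangular Pieri rule applies: for a partition $\alpha$ and a rectangle $(p^q)$ one has $c^{\beta}_{\alpha,(p^q)}\in\{0,1\}$, with $c^{\beta}_{\alpha,(p^q)}=1$ exactly when $|\beta|=|\alpha|+pq$ and $\beta_{i+q}\le\alpha_i\le\beta_i\le\alpha_i+p$ for all $i\ge1$. Since an iterated Littlewood--Richardson coefficient does not depend on the order of its factors, it suffices to choose an ordering of the chains and to exhibit one chain of partitions $\emptyset=\rho^{(-1)}\subseteq\rho^{(0)}\subseteq\cdots\subseteq\rho^{(m)}=\tau+(N^n)$ in which, for each $j$, the partition $\rho^{(j)}$ is obtained from $\rho^{(j-1)}$ by adjoining the corresponding rectangle in the sense just described. (One could instead try to build a highest-weight vector of $V_\tau$ directly in the compact picture of $\Ind_{L}^{G}(\sigma)$, but the Littlewood--Richardson route keeps the bookkeeping cleanest.)

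To produce this chain I would process the chains in the order~\eqref{eq:order} --- decreasing $k_i$, ties broken by increasing $d_i$ --- and take $\rho^{(j)}$ (before the ${\det}^N$ twist) to be the decreasing rearrangement of the multiset obtained from $\mathcal{T}_0,\dots,\mathcal{T}_j$ by performing exactly those coordinate changes (a), (b), (c) of Algorithm~\ref{alg:spinlkt} that involve only chains among $\mathcal{C}_0,\dots,\mathcal{C}_j$; thus $\rho^{(m)}$ is the rearrangement of $\bigcup_i\mathcal{T}_i'$, namely $\tau$. The point of rules (a), (b), (c) is that moving from stage $j-1$ to stage $j$ amounts to raising, in the pattern prescribed in the relevant box, a window of length $d_j$ inside $\rho^{(j-1)}$, so what must be checked --- case by case against (a), (b), (c) --- is that this window lies in the band allowed by the Pieri rule, i.e. that $\rho^{(j)}_{l+d_j}\le\rho^{(j-1)}_l$ and $\rho^{(j)}_l\le\rho^{(j-1)}_l+k_j$ for all $l$, the $*$-entries (which stay fixed) causing no obstruction. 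The main obstacle is the interaction of several links at one chain: as in Example~\ref{eg:spinlowest}, a chain $\mathcal{C}_j$ can be linked to several earlier chains, so $\mathcal{T}_j$ is modified more than once, and one must show that the windows coming from distinct linked pairs occupy pairwise disjoint coordinates and that their superposition is still a partition satisfying the two Pieri inequalities at every stage. This disjointness --- more precisely, nesting --- of windows should follow from the disjointness of the chains' entry-sets together with~\eqref{eq:order}; extracting the exact combinatorial statement that legitimizes the superposition, and verifying the upper bound $\rho^{(j)}_l\le\rho^{(j-1)}_l+k_j$ --- which, as small two-chain examples already show, is the first inequality to come under strain --- is where the real work lies.
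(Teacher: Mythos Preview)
Your plan is essentially the paper's own argument: translate so that all $k_i>0$, induct on the number of chains, and at stage $j$ reduce to showing that the single Littlewood--Richardson coefficient $c^{\,\rho^{(j)}}_{\,\rho^{(j-1)},\,(k_j^{\,d_j})}$ is positive; the paper's intermediate partitions $\tau_j$ are exactly your $\rho^{(j)}$, and the paper uses the same induction-in-stages inequality to pass from the full multiplicity to this single coefficient.

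The gap is in your rectangular Pieri rule. You assert $c^{\beta}_{\alpha,(p^q)}=1$ exactly when $|\beta|=|\alpha|+pq$ and $\beta_{i+q}\le\alpha_i\le\beta_i\le\alpha_i+p$ for all $i$. These conditions are necessary (they say $\beta/\alpha$ has at most $p$ boxes per row and at most $q$ per column) but not sufficient: with $\alpha=(1)$, $(p^q)=(2^2)$ and $\beta=(3,1,1)$ every inequality holds, yet $s_{(1)}s_{(2,2)}=s_{(3,2)}+s_{(2,2,1)}$, so $c^{(3,1,1)}_{(1),(2^2)}=0$. Hence verifying only your band inequalities for $\rho^{(j)}/\rho^{(j-1)}$ would not prove the theorem. (A side remark: your worry about the upper bound $\rho^{(j)}_l\le\rho^{(j-1)}_l+k_j$ is misplaced --- after the ${\det}^N$ twist the rectangle has width $k_j+N$, so that bound is vacuous for large $N$; the content lies elsewhere.) The paper closes exactly this gap not by quoting a Pieri-type criterion but by exhibiting one explicit LR skew tableau of shape $\tau_m/\tau_{m-1}$ and weight $(k_m^{\,d_m})$: it writes out the row lengths of $\tau_m/\tau_{m-1}$ in terms of the overlaps between $\mathcal{C}_m$ and the earlier linked chains, then fills those rows by slicing the rectangular tableau of shape $(k_m^{\,d_m})$ into a nested family of subtableaux and distributing the slices so that the result is semistandard with reverse-lattice reading word. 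So your outline is correct, but to complete it you must replace the faulty criterion by such an explicit tableau construction --- which is precisely the ``real work'' you anticipated, and is what the paper actually carries out.
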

\begin{proof}
Let $\displaystyle J(\lambda,-s\lambda) = {\rm Ind}_{\prod_{i=0}^m GL(a_i)}^{GL(n)}(\bigotimes_{i=0}^m V_{(k_i,\dots,k_i)})$.
By rearranging the Levi factors, one can assume the chains $\mathcal{C}_0$, $\dots$, $\mathcal{C}_m$ satisfy Equation \eqref{eq:order}.
We are interested in studying
\begin{align*}
\left[{\rm Ind}_{\prod_{i=0}^m GL(a_i)}^{GL(n)}(\bigotimes_{i=0}^m V_{(k_i,\dots,k_i)}): V_{\tau}\right]
&= \left[\bigotimes_{i=0}^m V_{(k_i,\dots,k_i)}: V_{\tau}|_{\prod_{i=0}^m GL(a_i)}\right] \\
&= \left[\bigotimes_{i=0}^m V_{(k_i+t,\dots,k_i+t)}: V_{\tau}|_{\prod_{i=0}^m GL(a_i)} \otimes \bigotimes_{i=0}^m V_{(t,\dots,t)}\right] \\
&= \left[\bigotimes_{i=0}^m V_{(k_i+t,\dots,k_i+t)}: V_{\tau}|_{\prod_{i=0}^m GL(a_i)} \otimes V_{(t,\dots,t)}|_{\prod_{i=1}^m GL(a_i)}\right] \\
&= \left[\bigotimes_{i=0}^m V_{(k_i+t,\dots,k_i+t)}: V_{\tau+(t,\dots,t)}|_{\prod_{i=0}^m GL(a_i)}\right] \\
&= \left[{\rm Ind}_{\prod_{i=0}^m GL(a_i)}^{GL(n)}(\bigotimes_{i=0}^m V_{(k_i+t,\dots,k_i+t)}): V_{\tau+(t,\dots,t)}\right]
\end{align*}
So we can assume $k_i > 0$ for all $i$ without loss of generality.

We prove the theorem by induction on the number of Levi components.
The theorem obviously holds when there is only one Levi component -- the
irreducible module is a unitary character of $GL(n)$. Now suppose we have
the hypothesis holds when there are $m$ Levi factors, i.e.
$$\left[{\rm Ind}_{\prod_{i=0}^{m-1} GL(a_i)}^{GL(n')}(\bigotimes_{i=0}^{m-1} V_{(k_i,\dots,k_i)}) : V_{\tau_{m-1}}\right] > 0,$$
where $n' = n - a_m$, and $\tau_{m-1}$ is obtained by applying Algorithm \ref{alg:spinlkt} on
$\bigcup_{i=0}^{m-1} \mathcal{C}_i$. Suppose now $\tau_{m}$ is obtained by applying
Algorithm \ref{alg:spinlkt} on $\bigcup_{i=0}^m \mathcal{C}_i$. Then
\begin{align*}
&\ \left[{\rm Ind}_{\prod_{i=0}^{m} GL(a_i)}^{GL(n)}(\bigotimes_{i=0}^{m} V_{(k_i,\dots,k_i)}) : V_{\tau_{m}}\right] \\
= &\ \left[{\rm Ind}_{GL(n') \times GL(a_m)}^{GL(n)}\left({\rm Ind}_{\prod_{i=0}^{m-1} GL(a_i)}^{GL(n')}(\bigotimes_{i=0}^{m-1} V_{(k_i,\dots,k_i)}) \otimes V_{(k_m,\dots,k_m)}\right) : V_{\tau_{m}}\right] \\
\geq &\ \left[{\rm Ind}_{GL(n') \times GL(a_m)}^{GL(n)}(V_{\tau_{m-1}} \otimes V_{(k_m,\dots,k_m)}) : V_{\tau_{m}}\right] \\
=&\ c_{\tau_{m-1}, (k_m,\dots,k_m)}^{\tau_{m}}
\end{align*}
 Here $c_{\mu,\nu}^{\lambda}$ is the Littlewood-Richardson coefficient, and the last step uses Theorem 9.2.3 of \cite{GW}.

Suppose $\tau_{m-1} = \bigcup_{i=0}^{m-1} \mathcal{T}_i''$. Here these $\mathcal{T}_i''$ are obtained by
applying Algorithm \ref{alg:spinlkt} on $\mathcal{C}_0$, $\dots$, $\mathcal{C}_{m-1}$.
Then $\tau_m$ is obtained from applying Algorithm \ref{alg:spinlkt} on $\mathcal{T}_i''$ and $\mathcal{T}_m = (k_m,\dots,k_m)$
for all linked $\mathcal{C}_i$ and $\mathcal{C}_m$.
More precisely, by applying Rules (a) -- (c) in Algorithm \ref{alg:spinlkt}, $\tau_m$ is obtained from $\tau_{m-1}$ by the following:
\begin{itemize}
\item[(i)] Construct a new partition $\tau_{m-1} \cup (k_m,\dots,k_m)$.
\item[(ii)] For each linked $\mathcal{C}_i$ and $\mathcal{C}_m$, add $(0,\dots,0, A, A-1,\dots,a+1,a,0,\dots,0)$ on the rows of
$\tau_{m-1}$ corresponding to $\mathcal{T}_i''$, and subtract $(0,\dots,0, A, A-1,\dots,a+1,a,0,\dots,0)$ on the corresponding rows of $(k_m,\dots,k_m)$.
\item[(iii)] $\tau_m$ is obtained by going through (ii) for all $\mathcal{C}_i$ linked with $\mathcal{C}_m$.
\end{itemize}
By the above construction of $\tau_m$, it follows from the Littlewood-Richardson Rule as stated on page 420 of \cite{GW} that
\begin{equation}\label{LR-geq1}
c_{\tau_{m-1}, (k_m,\dots,k_m)}^{\tau_{m}} \geq 1.
\end{equation}
Indeed,
it suffices to find \emph{one} \emph{L-R skew tableaux} of shape $\tau_m/\tau_{m-1}$ and weight
$$
(\underbrace{k_m,\dots, k_m}_{d_m})
$$
in the sense of Definition 9.3.17 of \cite{GW}. Recall that $d_m$ is the number of entries of the chain $\mathcal{C}_m$.

To do so, we first describe the Ferrers diagram $\tau_m/\tau_{m-1}$. Suppose
$\mathcal{C}_{i_1}$, $\dots$, $\mathcal{C}_{i_l}$ are linked to $\mathcal{C}_m$
with $i_1 > \dots > i_l$. By Step (ii) of the above algorithm, we add $(A_j, A_j - 1, \dots, a_j +1, a_j)$
to the rows in $\tau_{m-1}$ corresponding to the chains $\mathcal{C}_{i_j}$.
Note that by our ordering of the chains, we must have
$$A_l > \dots > a_l > A_{l-1} > \dots > a_{l-1} > \dots > A_1 > \dots > a_1.$$
The rows of the Ferrers diagram $\tau_m/\tau_{m-1}$ have lengths
\begin{equation} \label{eq-skew}
\underbrace{A_1, \dots, a_1}_{:= \mathcal{R}_1}; \cdots; \underbrace{A_l, \dots, a_l}_{:= \mathcal{R}_l}; \underbrace{k_m,\dots, k_m; (k_m - a_1), \dots, (k_m - A_1); \cdots;  (k_m - a_l), \dots, (k_m - A_l)}_{:= \mathcal{R}_{l+1}}
\end{equation}
with $\sum_{j=1}^{l+1} |\mathcal{R}_j| = d_m$, where $|\mathcal{R}_j|$ is the number of entries in $\mathcal{R}_j$.

Now we fill in the entries on each row of $\tau_m/\tau_{m-1}$ as follows. Consider the standard Young tableau $T$ whose row sizes are $(\underbrace{k_m,\dots, k_m}_{d_m})$ and the
entries of the $i$-th row of $T$ are all equal to $i$. Now let a sequence of subtableaux
of $T$ given by
$$T_1 \subset T_2 \subset \dots \subset T_l \subset T_{l+1} := T$$
such that for each $1 \leq j \leq l$, $T_j$ has the shape of the form
$$
A_j > \dots > a_j > \dots > A_1 > \dots > a_1.
$$
Consider the skew tableau $T_j/T_{j-1}$ for $1 \leq j \leq l+1$ (where we take $T_0$ to be the empty tableau),
then the column sizes of $T_j/T_{j-1}$ is the same as the parametrization for the tableau $\mathcal{R}_{j}$ marked
in \eqref{eq-skew}.

For each $1 \leq j \leq l+1$, fill in the rows of the Ferrers diagram $\tau_m/\tau_{m-1}$
corresponding to $\mathcal{R}_j$ in \eqref{eq-skew} by filling the $t$-th row of $\mathcal{R}_j$
with the $t$-th entries on each column of $T_j/T_{j-1}$ counting from the top in ascending
order. This will give us a \emph{semi-standard skew tableau} of shape $\tau_m/\tau_{m-1}$ and weight $(\underbrace{k_m,\dots, k_m}_{d_m})$ (see Definition 9.3.16 of \cite{GW}), whose row word is a \emph{reverse lattice word} by Definition 9.3.17 of \cite{GW}. To sum up, it is a desired L-R tableau and \eqref{LR-geq1} follows.
%
%
%
%
%
%
%
%
%
%
%
%
%
%
%
%
\end{proof}

\begin{proposition}\label{prop-spin-lowest}
Let $J(\lambda,-s\lambda)$ be a unitary module of $GL(n)$ in Theorem \ref{unitary}, and
$V_{\tau}$ be the $K$-type obtained by Algorithm \ref{alg:spinlkt}.
Then $\tau$ satisfies
$$\{\tau - \rho\} = 2\lambda - \rho.$$
Consequently, $V_{\tau}$ is a spin-lowest $K$-type of $J(\lambda,-s\lambda)$ by Equation \eqref{spin=2lambda}.
\end{proposition}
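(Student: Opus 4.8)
The plan is to reduce the identity $\{\tau-\rho\}=2\lambda-\rho$ to a purely combinatorial statement about multisets of coordinates, and then to establish that statement by induction on the number of chains, running parallel to the proof of the preceding theorem. First note that since $2\lambda=\sum_i c_i\varpi_i$ with every $c_i\geq 1$, we have $2\lambda-\rho=\sum_i(c_i-1)\varpi_i$, which is dominant; hence $\{\tau-\rho\}=2\lambda-\rho$ is equivalent to $\tau-\rho$ and $2\lambda-\rho$ lying in one $W$-orbit. As $W=S_n$ permutes coordinates, this is in turn equivalent to the assertion that $\tau-\rho$ and $2\lambda-\rho$ carry the same multiset of coordinates. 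Here $\rho=(\frac{n-1}{2},\frac{n-3}{2},\dots,\frac{1-n}{2})$, the coordinate multiset of $2\lambda$ is the disjoint union $\bigsqcup_{i=0}^m\mathcal{C}_i$ of all the chain entries, and $\tau$ is the decreasing rearrangement of $\bigcup_{i=0}^m\mathcal{T}_i'$; so the whole content is this multiset identity.

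For the base case of a single chain $\mathcal{C}_0$ --- so $d_0=n$ and Algorithm \ref{alg:spinlkt} applies no rule --- one has $\tau=(k_0,\dots,k_0)$, and a direct check shows that $(k_0,\dots,k_0)-\rho$ and $\mathcal{C}_0-\rho$ both have coordinate multiset $\{k_0+\frac{d_0-1}{2},\,k_0+\frac{d_0-3}{2},\,\dots,\,k_0-\frac{d_0-1}{2}\}$: subtracting $\rho$, whose consecutive coordinates differ by $1$, from the length-$n$ constant vector $(k_0,\dots,k_0)$ --- or from $\mathcal{C}_0$, whose consecutive entries differ by $2$ --- yields the same arithmetic progression. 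For $m\geq 1$ I would induct on the number of chains, using the explicit recursive description of $\tau_m$ in terms of $\tau_{m-1}$ (for $GL(n')$ with $n'=n-d_m$) that was worked out in the proof of the preceding theorem: one forms $\tau_{m-1}\cup(k_m,\dots,k_m)$, and for every chain $\mathcal{C}_i$ linked to $\mathcal{C}_m$ one adds $(A_i,A_i-1,\dots,a_i)$ to the rows coming from $\mathcal{T}_i''$ and subtracts the matching amounts from the inserted block. One then tracks the effect on the multiset of ``coordinates minus the lengthened staircase''.

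The heart of the argument is the case-by-case verification --- Rule (a), where $d_j\leq p$; Rule (b), where $d_j>p$; Rule (c) --- that each jointly modified pair of blocks, once re-sorted and stripped of the appropriate segment of $\rho$, returns precisely the entries of $\mathcal{C}_i$ and $\mathcal{C}_m$ that interleave in the global order, while the untouched rows, minus the remaining part of the staircase, reproduce the coordinates of the $\tau_{m-1}$-part. Each such check is a short telescoping computation on consecutive coordinates whose only numerical input is the linking relation $C_{j,1}=C_{i,d_i}+2p-1$ (respectively $C_{j,1}=C_{i,d_i}+2q-1$) --- exactly the relation that lets a chain with gaps $2$ slot into a staircase with gaps $1$. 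I expect the real difficulty to be organizational rather than any single estimate: after the global sort defining $\tau$, one must pin down which coordinate slots the modified blocks occupy relative to the untouched ones, and since one chain $\mathcal{C}_i$ can be linked to several later chains the corrections stack. Here the normalization \eqref{eq:order} is indispensable: it makes the ``linked'' relation a forest and forces the correction blocks to be nested and pairwise non-overlapping, as reflected in the inequalities $A_l>\dots>a_l>A_{l-1}>\dots>a_1$ from the preceding proof; the delicate point is to confirm that subtracting the staircase commutes with this nested bookkeeping.

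Granting the multiset identity, we get $\{\tau-\rho\}=2\lambda-\rho$, and therefore $\|\tau\|_{\rm spin}=\|\{\tau-\rho\}+\rho\|=\|2\lambda\|$ by \eqref{spin-norm}. Since $J(\lambda,-s\lambda)$ is unitary, Parthasarathy's Dirac inequality \eqref{Dirac-inequality} gives $\|\delta\|_{\rm spin}\geq\|\Lambda\|=\|2\lambda\|$ for every $K$-type $V_\delta$ of $J(\lambda,-s\lambda)$ (the equality $\|\Lambda\|=\|2\lambda\|$ holding because $s\in W$, cf.\ \eqref{spin=2lambda}), while $V_\tau$ itself occurs in $J(\lambda,-s\lambda)$ by the preceding theorem. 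Hence $V_\tau$ realizes the minimal spin norm among the $K$-types of $J(\lambda,-s\lambda)$, i.e.\ it is a spin-lowest $K$-type, as claimed.
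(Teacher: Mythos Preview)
Your proposal correctly identifies the same inductive strategy as the paper: reduce $\{\tau-\rho\}=2\lambda-\rho$ to a coordinate-multiset identity, induct on the number of chains, and at the inductive step feed in the recursive description of $\tau_m$ from the proof of the preceding theorem, then check Rule~(a) versus the combined Rule~(b)/(c) configuration by direct coordinate bookkeeping. The reduction, the base case, and the concluding paragraph on spin-lowestness are all fine.

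What remains a genuine gap is the verification itself. You describe each case as ``a short telescoping computation'', but in the paper this is precisely where all the work lies: one must write out $2\lambda-\rho$ and $\tau$ explicitly (the paper introduces the shorthand $(a)^{\epsilon}_d$ for this), pin down the exact index each modified coordinate of $\tau$ occupies after the global sort, and then check block by block --- items (i)--(iv) in the paper's proof --- that the differences match, including a final comparison of a distinguished entry to kill the additive constant. Your remark that the ordering \eqref{eq:order} ``makes the `linked' relation a forest'' is not literally true (linking is symmetric and can have cycles), and more importantly the organizational difficulty you flag is not resolved by that ordering alone: the paper's argument hinges on the specific shape of the last chain $\mathcal{C}_m$ relative to the earlier ones (either sitting inside some $\mathcal{C}_i$ via Rule~(a), or straddling $\mathcal{C}_j,\mathcal{C}_r,\dots,\mathcal{C}_{m-1}$ via Rules~(b) and (c) as in display \eqref{eq:interlaced}), and one really has to track the auxiliary integers $p,q_x,a_x,d_x,A_x,D_x$ through several identities before the two sides line up. Until those computations are written out, the proposal is a correct plan rather than a proof.
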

\begin{proof}
We prove the proposition by induction on the number of chains in $(\lambda,-s\lambda) = \bigcup_{i=0}^m \mathcal{C}_i$, where the chains are arranged so that Equation \eqref{eq:order} holds.
Suppose that the proposition holds for $\bigcup_{i=0}^{m-1} \mathcal{C}_i$. There are two possibilities when adding $\mathcal{C}_m$:

\begin{itemize}
\item There exists $\mathcal{C}_i$ such that $\mathcal{C}_i$ and $\mathcal{C}_m$ is related by Rule (a) in Algorithm \ref{alg:spinlkt}:
\begin{align*}
\{ \ \ \ \ \ \ \ \ \ \ \ \ \ \ \ \ \ \ \ \ \ \ \mathcal{C}_i\ \ \ \ \ \ \ &\ \ \ \  \ \ \ \ \ \ \ \ \ \ \ \} \\
&\{\ \ \mathcal{C}_{m}\ \ \}
\end{align*}
\item There exist $\mathcal{C}_j$ and $\mathcal{C}_{r}$, $\dots$,  $\mathcal{C}_{m-1}$,
such that $\mathcal{C}_j$ and $\mathcal{C}_m$ are related by Rule (b), and
$\mathcal{C}_{l}$, $r \leq l \leq m-1$ and $\mathcal{C}_m$ are related by Rule (c) in Algorithm \ref{alg:spinlkt}:
\begin{align*}
\{\ \ \ \ \ &\mathcal{C}_j\ \ \ \ \ \} \ \ \ \ \  \{\ \ \mathcal{C}_{r}\ \ \} \ \ \ \dots \ \ \ \{\ \ \mathcal{C}_{m-1}\ \ \} \\
&\{ \ \ \ \ \ \ \ \ \ \ \ \ \ \ \ \ \ \ \ \ \ \ \mathcal{C}_m\ \ \ \ \ \ \ \ \ \ \  \ \ \ \ \ \ \ \ \ \ \ \}
\end{align*}
\end{itemize}
We will only study the second case, and the proof of the first case is simpler. Suppose the
chains in the second case are interlaced in the following fashion:
\begin{equation} \label{eq:interlaced}
\begin{aligned}
\{\ \ \ \ &\ \mathcal{C}_{j}\ \ \ \ \ \ \ \ \ \}   &&\ \ \{\overbrace{\ \ \mathcal{C}_{r}\ \ }^{d_{r}}\} \ \ \ \ \ \ \ \ \ \ \cdots\cdots &&\ \ \{\overbrace{\ \ \ \mathcal{C}_{m-1}\ \ \ }^{d_{m-1}}\}\\
\{&\underbrace{C_{m,1}, \cdots}_{p}\ \ \underbrace{\cdots\cdots }_{a_r} &&\underbrace{\cdots\ \ \cdots }_{d_{r}} \ \ \underbrace{\cdots\cdots}_{a_{r+1}} \ \ \cdots\cdots &&\underbrace{\cdots \ \ \ \ \cdots  }_{d_{m-1}},\ \underbrace{\cdots,\ C_{m,d_m}}_{a_m}\}
\end{aligned}
\end{equation}
for some $j+1 \leq r \leq m-1$, and the chains $\mathcal{C}_{j+1}$, $\dots$, $\mathcal{C}_{r-1}$---which have not been shown in \eqref{eq:interlaced}---are linked with $\mathcal{C}_j$ under Rule (a) of Algorithm \ref{alg:spinlkt}.

To simplify the calculations below, we introduce the notation
$$(a)^{\epsilon}_d:=\underbrace{a, a+\epsilon, \dots, a+(d-1)\epsilon}_d.$$
Then $2\lambda$ is equal to the entries in Equation \eqref{eq:interlaced}.
Since the values of the adjacent entries within the same chain differ by $2$,
and the values of the interlaced entries differ by $1$, one can calculate
$2\lambda - \rho$ up to a translation by a constant on all coordinates as follows:
\begin{equation} \label{2lambda}
\begin{aligned}
\{\cdots&\ (A_{r-1})^0_p\}   &&\ \ \{(A_{r})^0_{d_{r}}\} \ \ \ \ \ \ \ \ \ \ \cdots\cdots &&\ \ \{(A_{m-1})^0_{d_{m-1}}\}\\
\dots\ \ \{&(A_{r-1})^0_p\ \ (A_r)^{-1}_{a_r} &&(A_{r})^0_{d_{r}}\ \ (A_{r})^{-1}_{a_{r+1}} \ \ \cdots\cdots &&(A_{m-1})^0_{d_{m-1}}\ \ (A_{m-1})^{-1}_{a_m}\}
\end{aligned}
\end{equation}
where $\displaystyle A_x := \sum_{l=x}^{m-1} a_{l+1}$ for $r-1 \leq x\leq m-1$
(note that the smallest entry of \eqref{2lambda} is $1$, appearing at the rightmost entry of the bottom chain).

\smallskip

On the other hand, the calculation in Algorithm \ref{alg:spinlkt} gives $\tau$ as follows:
$$\begin{aligned}
(\cdots & \ \ (k_j)^{0}_{p})  \ \ \ \ \ \ \ \ \ \ \ \ \ \  (k_{r})^{0}_{d_{r}} \ \ \ \ \ \ \ \cdots\ \ \ \ \ \ \ (k_{m-1})^{0}_{d_{m-1}}\\
\dots \ \ (&(k_m)^{0}_{p}\ \ (k_m)^{0}_{a_r}\ \ (k_m)^{0}_{d_{r}}\ (k_m)^{0}_{a_{r+1}} \  \cdots\ (k_m)^{0}_{d_{m-1}}(k_m)^{0}_{a_m})
\end{aligned} = \bigcup_{i=0}^m \mathcal{T}_i \ \longrightarrow\ \
\bigcup_{i=0}^m \mathcal{T}_i' = \tau,$$
where $\bigcup_{i=0}^m \mathcal{T}_i'$ is given by
\begin{equation} \label{tau}
\begin{aligned}
(\ \cdots   &\ (k_j+1)^{1}_{p})   \ \ \ \ \ \ \ \ \ \ \   (k_{r}+(q_r-d_{r}+1))^1_{d_{r}} \ \ \ \ \ \ \cdots\ \ \ \ \ \ \ (k_{m-1}+(q_{m-1}-d_{m-1}+1))^1_{d_{m-1}}\\
\dots \ \ (&(k_m-1)^{-1}_{p} (k_m)^{0}_{a_r} (k_m-(q_r-d_{r}+1))^{-1}_{d_{r}}(k_m)^{0}_{a_{r+1}}  \cdots(k_m-(q_{m-1}-d_{m-1}+1))^{-1}_{d_{m-1}}(k_m)^{0}_{a_m})
\end{aligned}
\end{equation}
and $q_i$ are obtained by Rule (c) of Algorithm \ref{alg:spinlkt}. For instance, $q_r=p+a_r+d_{r}$. Note that
$$
k_j-(d_j-1)=k_{r}+(d_{r}-1)+2a_r+2.
$$
Therefore,
$$
k_j-d_j=k_{r}+d_{r}+2a_r.
$$
From this, one deduces easily that $k_j\geq k_{r}+q_r+1$. Thus it makes sense to talk about the interval $[k_{r}+q_r+1, k_j]$.

Before we proceed, we pay closer attention to the coordinates of $\mathcal{T}_{j}'$, which is the
leftmost chain on the top row of Equation \eqref{tau}. More precisely, it consists of three parts:
\begin{itemize}
\item[(i)] As mentioned in the paragraph after Equation \eqref{eq:interlaced}, by applying Rule (a) of Algorithm \ref{alg:spinlkt}
between $\mathcal{C}_j$ and each of $\mathcal{C}_{j+1}$, $\dots$, $\mathcal{C}_{r-1}$,
one can check that
$$\bigcup_{i=j+1}^{r-1}\mathcal{T}_{i}' \subset [k_{r}+q_r+1, k_j].$$
Suppose there are $\delta \geq 0$ coordinates in $\bigcup_{i=j+1}^{r-1}\mathcal{T}_{i}'$,
then there will be exactly $\delta$ coordinates in $\mathcal{T}_{j}'$ having coordinates strictly
greater than $k_j + p$.
\item[(ii)] By applying Algorithm \eqref{alg:spinlkt} to $\mathcal{C}_j$ and $\mathcal{C}_m$,
we have $p$ coordinates $(k_j+1)^1_p$ in $\mathcal{T}_j'$ as in Equation \eqref{tau}.
\item[(iii)] The other coordinates of $\mathcal{T}_j'$ are either equal to $k_j$, or smaller
than $k_j$ if they are linked with $\mathcal{C}_t$ with $t < j$.
\end{itemize}
In conclusion, the coordinates of $\mathcal{T}_j'$ are given by $(\overbrace{\sharp\ \dots\ \sharp}^{\delta} ; (k_j+1)^1_p; \overbrace{\flat\ \dots\ \flat}^{d_j - \delta - p})$,
where $\sharp\ \dots\ \sharp$ has coordinates greater than $k_j+p$, and
$\flat\ \dots\ \flat$ has coordinates smaller than $k_j + 1$.

We now arrange the coordinates of $\bigcup_{i = j}^{m} \mathcal{T}_i'$ in Equation \eqref{tau} as follows:
\begin{align*}
&\overbrace{\sharp\ \dots\ \sharp}^{\delta} > \overbrace{(k_j+1)^{1}_{p}}^{p} > \overbrace{\flat \cdots \flat}^{d_j-p-\delta} > \overbrace{\bigcup_{i=j+1}^{r-1} \mathcal{T}_i'}^{\delta} > \mathcal{T}_r' > \dots > \mathcal{T}_{m-1}' > (k_m)^{0}_{a_r} = \dots = (k_m)^{0}_{a_m} \\
&  > (k_m-1)^{-1}_{p} > (k_m-(q_r-d_{r}+1))^{-1}_{d_{r}} > \dots > (k_m-(q_{m-1}-d_{m-1}+1))^{-1}_{d_{m-1}}
\end{align*}
Here elements in the blocks $\mathcal{T}'_r, \dots, \mathcal{T}'_{m-1}$ are still kept in the increasing manner. Note that
if $x < y$, then $\mathcal{T}_x' > \mathcal{T}_y'$ in terms of their coordinates.

We index the coordinates of $\tau$ shown in Equation \eqref{tau} using the above ordering, with the smallest coordinate indexed by $1$:
\begin{equation} \label{rho}
\begin{aligned}
(\dots &\ (d_m+D_r+ d_j - p+1)^1_p) \ \ ((d_m+D_{r+1}+1)^1_{d_{r}}) \ \ \ \  \ \cdots\ \ \ \ \ \ ((d_m+1)^1_{d_{m-1}})\\
(&(D_r+p)^{-1}_p  (D_r + p + 1)^1_{a_r}(D_r)^{-1}_{d_{r}} (D_r + p + a_r + 1)^1_{a_{r+1}} \cdots (D_{m-1})^{-1}_{d_{m-1}} (D_r + p + \sum_{l=r}^{m-1} a_l + 1)^1_{a_m}),
\end{aligned}
\end{equation}
where $\displaystyle D_x := \sum_{l = x}^{m-1} d_{l}$ for $r\leq x\leq m-1$. Note that the coordinates of the last row read as
\begin{align*}
(D_r + p,  \dots, 2, 1)&=((D_r+p)^{-1}_p;\ (D_r)^{-1}_{d_{r}};\ \dots;\ (D_{m-1})^{-1}_{d_{m-1}}),\\
(D_r + p + 1, \dots, d_m-1, d_m)&=\\
((D_r + p + 1)^1_{a_r};\ \dots ;&\ (D_r + p + \sum_{l=r}^{x-1} a_l + 1)^1_{a_x};\ \dots;\  (D_r + p + \sum_{l=r}^{m-1} a_l + 1)^1_{a_m}).
\end{align*}

\smallskip

Up to a translation of a constant of all coordinates, the difference between Equation \eqref{tau} and \eqref{rho} gives (a $W$-conjugate of) $\{\tau - \rho\}$, which is of the form:
\begin{equation} \label{taurho}
\begin{aligned}
(\cdots &\  (\beta_j)^0_p)  \ \ \ \ \ \ \ \ \ \ \  (\beta_r)^0_{d_{r}} \ \ \ \ \ \ \cdots\ \ \ \ \ \ (\beta_{m-1})^0_{d_{m-1}}\\
(&(\alpha_j)^0_p\ \ {\bf *\ *\ *\ }(\alpha_r)^0_{d_{r}}\ {\bf *\ *\ *}\  \cdots\ (\alpha_{m-1})^0_{d_{m-1}}\ {\bf *\ *\ *})
\end{aligned}
\end{equation}
Our goal is to show \eqref{2lambda} and \eqref{taurho} are equal up to a translation of a constant of all coordinates. So we need to show the following:
\begin{itemize}
\item[(i)] $\alpha_j = \beta_j$: We need to show
$$k_m - 1 - (D_r + p) = k_j + 1 - (d_m + D_r + d_j-p + 1).$$
In fact, we have
\begin{align*}
C_{m,1} &= C_{j,d_j} + 2p - 1 \\
k_m +         (d_m - 1) &= k_j - (d_j-1) + 2p - 1 \\
k_m - p  -1    &=   k_j  - d_j + p  - d_m   \\
k_m - 1 - (D_r + p)   &=   k_j + 1  - (d_m + D_r + d_j -p + 1)
\end{align*}
as required.

\item[(ii)] $\alpha_x = \beta_x$ for all $r \leq x \leq m-1$: This is the same as showing
$$k_m - (q_x - d_{x} + 1) - D_x = k_{x} + (q_x - d_{x} + 1) - (d_m + D_{x+1}+1).$$
As in (i), we consider
\begin{align*}
C_{m,1} &= C_{x,d_{x}} + 2q_x - 1 \\
k_m +         (d_m - 1) &= k_{x} - (d_{x}-1) + 2q_x - 1 \\
k_m - q_x + d_{x} -1    &=   k_{x}  + q_x  - d_m   \\
k_m - q_x + d_{x} -1 - D_x + D_{x+1} + d_{x}   &=   k_{x}  + (q_x+1)  - (d_m + 1)   \\
k_m - q_x + d_{x} -1 - D_x   &=   k_{x}  + (q_x-d_{1}+1)  - (d_m + D_{x+1} + 1)
\end{align*}
as we wish to show.\\

\item[(iii)] $\alpha_j - \alpha_x = A_{r-1} - A_{x}$ for all $r \leq x \leq m-1$: In other words, we need to show
$$[(k_m - 1) - (D_r +p)] - [(k_m - (q_x-d_{x}+1)) - D_x] = A_{r-1} - A_{x} = a_r + \dots + a_x$$
Indeed, by looking at Equation \eqref{eq:interlaced} and applying Rule (c) of Algorithm \ref{alg:spinlkt}, one gets
\begin{align*}
p + (a_r + \dots + a_x) + (d_r + \dots + d_x) &= q_x \\
q_x - p  &= (A_{r-1} - A_{x}) + (D_r - D_{x+1})  \\
(k_m - 1) - (k_m - 1) + q_x - p - D_r + D_{x+1} &= A_{r-1} - A_{x} \\
[(k_m - 1) - (D_r +p)] - (k_m -1) + q_x + (D_x - d_x) &= A_{r-1} - A_{x} \\
[(k_m - 1) - (D_r +p)] - [(k_m - (q_x-d_x+1)) - D_x] &= A_{r-1} - A_{x}
\end{align*}
so the result follows.\\

\item[(iv)] Collecting the $*\ *\ *$ entries of Equation \eqref{taurho} consecutively from left to right gives
$$\underbrace{\alpha_j,\dots,\alpha_r+1}_{a_r};\ \cdots\cdots;\ \underbrace{\alpha_x,\dots,\alpha_{x+1}+1}_{a_{x+1}};\ \cdots\cdots;\ \underbrace{\alpha_{m-1},\dots,\alpha_{m-1} - (a_m-1)}_{a_m}$$
In order for the above expression to make sense, one needs $\alpha_x - \alpha_{x+1} = a_x$ for all $r\leq x \leq m-1$ for instance. This is indeed the case,
since $\alpha_x - \alpha_{x+1} = A_{x} - A_{x+1}$ by (iii), and the latter is equal to $a_{x+1}$ by the definition of $A_x$ for $r-1 \leq x\leq m-1$. So it suffices to check $\displaystyle k_m - (D_r + p + \sum_{l=r}^{x} a_l + 1) = \alpha_x.$

To see it is the case, one can check that the leftmost entry of the second row of Equation \eqref{taurho} is equal to
\begin{align*}
\alpha_j &= k_m - 1 - (D_r + p) \\
\alpha_x + A_{r-1} - A_{x} &= k_m - (D_r + p + 1)   \ \ \ \ \ \ \ \ \ \text{(by (iii))} \\
\alpha_x + \sum_{l = r}^{x} a_l &= k_m - (D_r + p + 1) \\
\alpha_x &= k_m - (D_r + p + \sum_{l=r}^{x} a_l + 1)
\end{align*}
as follows.
\end{itemize}

\smallskip

Combining (i) -- (iv), Equation \eqref{taurho} can be rewritten as
\begin{align*}
(\cdots&\ (\alpha_j)^0_p)   &&\ \ \ ((\alpha_r)^0_{d_r}) \ \ \ \ \ \ \ \ \ \ \cdots\cdots &&\ \ ((\alpha_{m-1})^0_{d_{m-1}})\\
(&(\alpha_j)^0_p\ \ \ \ \ \ \ (\alpha_j)^{-1}_{a_r} &&(\alpha_r)^0_{d_r}\ \ \ \ \  (\alpha_r)^{-1}_{a_{r+1}} \ \ \ \ \cdots\cdots &&(\alpha_{m-1})^0_{d_{m-1}}\ \ (\alpha_{m-1})^{-1}_{a_m})
,\end{align*}
whose coordinates are in descending order from left to right. So it is equal to
$\{\tau - \rho\}$ up to a translation of a constant. Moreover, by comparing it with Equation \eqref{2lambda},
we have shown that all coordinates of $2\lambda - \rho$ and $\{\tau - \rho\}$
differ by a constant (note that the other coordinates on the left of $\mathcal{C}_j$ are taken care of by induction hypothesis).
To see they are exactly equal to each other, we calculate the {\it true} values of $A_{m-1}$ and $\alpha_{m-1}$ in
$2\lambda - \rho$ and $\tau$ respectively on the entry marked by $\circledast$ below:
\begin{align*}
\{\dots, &\ \ *, \dots ,* \}   &&\ \ \ \{*,\dots,*\} \ \ \ \ \ \ \cdots\  &&\ \ \{*,\dots, *\}\\
\{&*,\dots, *;\ \ \ \ *,\dots,*;\ &&*, \dots,*;\ \ \ *,\dots,*;\ \ \ \cdots;\ &&*,\dots,\circledast;\ \ \underbrace{*,\dots,*}_{a_m}\}
\end{align*}
For $2\lambda - \rho$, $\circledast$ takes the value
$$C_{m, d_m - a_m} - \rho_{a_m + 2},$$
where $\rho = (\rho_n, \dots, \rho_2, \rho_1)$ with $\rho_i = \rho_1 + (i-1)$. So it can be simplified as
\begin{align*}
C_{m,d_m - a_m} - \rho_{a_m + 2} &=
k_m - (d_m - 1) + 2a_m - \rho_{a_m + 2} \\
&= k_m - d_m + 1 + 2a_m - \rho_1 - (a_m +1)\\
&= k_m - d_m + a_m - \rho_1
\end{align*}
On the other hand, for $\{\tau - \rho\}$, $\circledast$ takes the value
$$k_m - q_{m-1} - \rho_{1}$$
(Recall that we had $\alpha_{m-1} = k_m - q_{m-1} - 1$ for $\circledast$ in our previous calculation).

By looking at Equation \eqref{eq:interlaced} and applying Rule (c) of Algorithm \ref{alg:spinlkt} again, one has $q_{m-1} = d_m - a_m$, hence $2\lambda - \rho$
and $\{\tau - \rho\}$ takes the same value on the $\circledast$ coordinate.
Since we have seen that their coordinates differ by the same constant, one can conclude that $2\lambda - \rho = \{\tau - \rho\}$.
\end{proof}

\begin{example}
For the the interlaced chain in Example \ref{eg:spinlowest}, the translate of $2\lambda - \rho$
in Equation \eqref{2lambda} is equal to
\begin{align*}
\ &\begin{aligned}
\{10-8 && && 8-6\} && && \{6-4\} && && \{4-2\} \\
&& \{9-7 && && 7-5 && && 5-3 && && 3-1 && && 1-0\}
\end{aligned} \\
=\
 &\begin{aligned}
\{2 && && 2\} && && \{2\} && && \{2\} \\
&& \{2 && && 2 && && 2 && && 2 && && 1\}
\end{aligned}.
\end{align*}
Also, the translate of $\tau - \rho$ in Equation \eqref{taurho} is given by:
\begin{align*}
\ &\begin{aligned}
(9-8 && && 10-9) && && (8-7) && && (2-1) \\
&& (4-3 && && 3-2 && && 5-4 && && 7-6 && && 5-5)
\end{aligned}\\
=\ &\begin{aligned}
(1 && && 1) && && (1) && && (1) \\
&& (1 && && 1 && && 1 && && 1 && && 0)
\end{aligned}
\end{align*}
Hence their coordinates differ by the same constant $1$. To see $2\lambda - \rho$
and $\{\tau - \rho\}$ are equal, where $\rho = (4,3,2,1,0,-1,-2,-3,-4)$,
one can look at the {\it true} values
of them for the rightmost entry of the bottom chain:
\[
2\lambda - \rho:\ 1 - \rho_1 = 1 - (-4) = 5;\ \ \ \ \ \tau - \rho:\ 5 - \rho_5 = 5 - 0 = 5.
\]
Hence $2\lambda - \rho = \{\tau - \rho\} = (6,6,6,6,6,6,6,6,5)$, and the unique $\widetilde{K}$-type in the Dirac cohomology of
the corresponding unitary module is $V_{(6,6,6,6,6,6,6,6,5)}$.
\end{example}

\section{Scattered Representations of $SL(n)$}
It is easy to parametrize irreducible unitary representations of $SL(n)$ using the parametrization for $GL(n)$. In such cases, we
impose the condition on $\lambda$ such that the sum of the coordinates is equal
to $0$. In other words, for each possible regular, half-integral infinitesimal
character $\lambda$ for $SL(n)$, one can shift the coordinates by a suitable scalar,
so that it corresponds to an infinitesimal character $\lambda'$ of $GL(n)$ whose smallest coordinate
is equal to $1/2$.

Therefore, the irreducible unitary representations of $SL(n)$ are
parametrized by chains with $n$ coordinates whose smallest coordinate is equal to $1$.

The following proposition characterizes which of these representations are scattered in the sense of
Section \ref{scattered}:
\begin{proposition} \label{prop:scattered}
Let $\pi := J(\lambda,-s\lambda)$ be an irreducible unitary representation of $SL(n)$ such that $\lambda$ is dominant and half-integral. Then
$\pi$ is a scattered representation if and only if the translated Zhelobenko parameter
$(\lambda',-s\lambda')$ can be expressed as
a union of interlaced chains with smallest coordinate equal to $1$.
\end{proposition}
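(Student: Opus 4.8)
The plan is to verify the three defining conditions (i)--(iii) of Section \ref{scattered} for $\pi$, after reducing to the case where the parameter is a union of disjoint chains. If $\lambda$ is not regular then $2\lambda$ is not strictly dominant, so (i) fails and $\pi$ is not scattered; but a union of chains has pairwise distinct entries, so $(\lambda',-s\lambda')$ cannot be written as a union of chains either, and both sides of the equivalence are false. So I may assume $\lambda$ regular, and then $\pi$ is a unitary module of $SL(n)$ with regular half-integral infinitesimal character; by (the $SL(n)$ version of) Theorem \ref{unitary} it is induced from a unitary character, i.e. $(\lambda',-s\lambda')$ is a union of disjoint chains $\bigcup_{i=0}^m\mathcal C_i$ with smallest coordinate $1$, and (i) holds. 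Everything then comes down to deciding, for $\pi=J(\bigcup_i\mathcal C_i)$, when (ii) and (iii) hold, and showing this happens exactly when the $\mathcal C_i$ are interlaced.

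Condition (iii) is automatic. The associated $GL(n)$ module $\widetilde\pi=\mathrm{Ind}_{\prod_i GL(a_i)}^{GL(n)}(\bigotimes_i\det^{\,p_i})$ carries a $K$-type $V_\tau$ with $[\widetilde\pi:V_\tau]>0$ and $\{\tau-\rho\}=2\lambda-\rho$, by the Theorem preceding Proposition \ref{prop-spin-lowest} together with Proposition \ref{prop-spin-lowest} itself. Since $\{\tau-\rho\}=2\lambda-\rho$ is unchanged under translation in the central direction $(1,\dots,1)$, it descends to $SL(n)$, where it says $\|\tau\|_{\mathrm{spin}}=\|2\lambda'\|$; and $[\pi:V_\tau]>0$ follows by restriction from $GL(n)$. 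Hence \eqref{spin=2lambda} holds, so (iii) is satisfied for \emph{every} union of disjoint chains. (Alternatively: the $\nu$-parameter $(1-w_0^L)\lambda_L$ of $\widetilde\pi$ is traceless, so the central direction in $\fp$ for $GL(n)$ acts by $0$, and splitting $D_{GL}=D_{SL}+Z_0\otimes Z_0$ gives $H_D(\pi)\ne0$.)

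The main point is that (ii) $\iff$ interlaced. Writing $\pi=J(\lambda,-s\lambda)$ with $\lambda$ dominant means sorting the entries of $\bigcup_i\mathcal C_i$ into decreasing order to form $2\lambda$, whereupon $s=uw_0^Lu^{-1}$ with $u$ the sorting permutation and $w_0^L$ the longest element of $W_L$ (block reversal); in particular $s$ is automatically an involution, so only its support is at issue. Identifying $\{1,\dots,n\}$ with the slots of the decreasingly sorted $2\lambda$, I would check that $s$ reflects each chain $\mathcal C_i$ about its centre $k_i$ (swapping the slot carrying $k_i+x$ with the slot carrying $k_i-x$) and fixes every length-$1$ chain. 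Now $s_j\in\mathrm{supp}(s)$ iff $s$ does not stabilise $\{1,\dots,j\}$; since $\{1,\dots,j\}$ meets each chain in an upper segment (being downward closed for the order on values), and reflection about the centre fixes an upper segment of a length $\ge2$ chain only when it is empty or all of it, one gets $\mathrm{supp}(s)=\{s_1,\dots,s_{n-1}\}$ iff for every $j$ some chain of length $\ge2$ is split by the cut after the $j$-th value, i.e. iff every real level strictly between two consecutive values of $2\lambda$ lies in some open interval $(\min\mathcal C_i,\max\mathcal C_i)$. On the other side, two disjoint chains are \emph{linked} exactly when their closed ranges $[\min\mathcal C_i,\max\mathcal C_i]$ meet (a short check of $A>B>a$ and $B>A>b$ against disjointness), so $\bigcup_i\mathcal C_i$ is interlaced iff the intersection graph of these ranges is connected, iff $\bigcup_i[\min\mathcal C_i,\max\mathcal C_i]$ is a single interval, necessarily $[1,\max_i\max\mathcal C_i]$. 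Matching the two descriptions --- using that chain endpoints are integers while the separating levels may be chosen non-integral, and that $\min_i\min\mathcal C_i=1$ --- shows (ii) $\iff$ the $\mathcal C_i$ are interlaced. Combined with the reduction and (iii), this gives that $\pi$ is scattered iff $(\lambda',-s\lambda')$ is a union of interlaced chains with smallest coordinate $1$.

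The hard part will be the two middle moves of the last paragraph: establishing that $s=uw_0^Lu^{-1}$ really acts as the chain-wise central reflection, and converting ``$s$ has full support'' into the straddling statement about chains. Once $s$ is correctly described, the remaining equivalences --- linked $\iff$ ranges meet, interlaced $\iff$ union of ranges connected, every separating level straddled $\iff$ the union of ranges is a single interval --- are elementary, but require care with strict versus non-strict inequalities and with the (harmless) length-$1$ chains.
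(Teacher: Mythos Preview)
Your proposal is correct and follows essentially the same route as the paper. Both reduce to showing that condition (ii) on $s$ is equivalent to the chains being interlaced, after noting that (i) and (iii) hold automatically for any union of disjoint chains (the paper cites \cite[Theorem 2.4]{BP} for (iii); you reprove it via Proposition \ref{prop-spin-lowest}). For the key equivalence, the paper partitions the chains into equivalence classes under the ``linked'' relation and observes that each class occupies a block of consecutive positions, so that more than one class forces $s\in S_a\times S_{n-a}$; you phrase the same thing geometrically, replacing equivalence classes by connected components of the intersection graph of the ranges $[\min\mathcal C_i,\max\mathcal C_i]$ and the stabilisation condition by ``every gap between consecutive values is straddled by some chain.'' These are two descriptions of the same argument: your intersection graph is connected iff the paper's partition has a single class, and your straddling condition at the $j$-th gap is exactly the negation of $s\in S_j\times S_{n-j}$. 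Your treatment of the degenerate non-regular case and the length-$1$ chains is a bit more explicit than the paper's, but nothing substantively new is needed.
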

\begin{proof}
By the arguments in Section \ref{scattered}, one only needs to check
that $s \in W$ involves all simple reflections in its reduced expression
if and only if $(\lambda',-s\lambda') = \bigcup_{i=0}^m \mathcal{C}_i$ are interlaced.
Indeed, $s \in W$ can be read from $\bigcup_{i=0}^m \mathcal{C}_i$ as follows:
label the entries of $\bigcup_{i=0}^m \mathcal{C}_i$ in descending order, e.g.
$$\bigcup_{i=0}^m \mathcal{C}_i = \begin{aligned}
&\ \ \{p_{k+1},\ \ \dots \}\ \  \cdots \\
\{p_1,\ \ p_2, \ \ \dots,\ \ &p_k,  \ \ \ p_{k+2},\ \ \dots \}\ \ \cdots
\end{aligned}$$
with $p_1 > p_2 > \dots > p_n$, then we `flip' the entries of each chain $\mathcal{C}_i$ by
$\{C_{i,1},\dots,C_{i,d_i}\}$ $\rightarrow$ $\{C_{i,d_i},\dots,C_{i,1}\}$.
Suppose we have
$$\begin{aligned}
\begin{aligned}
&\ \ \{p_{s_{k+1}},\ \ \dots \}\ \  \cdots \\
\{p_{s_1},\ \ p_{s_2}, \ \ \dots,\ \ &p_{s_k},  \ \ \ p_{s_{k+2}},\ \ \dots \}\ \ \cdots
\end{aligned}
\end{aligned}$$
after flipping each chain, then $s \in S_n$ is obtained by $s = \begin{pmatrix}1 & 2 & \dots & n \\ s_1 & s_2 & \dots & s_n \end{pmatrix}$
(see Example \ref{eg:interlaced}).

Define the equivalence class of interlaced chains by letting $\mathcal{C}_i \sim \mathcal{C}_j$
iff $i = j$, or $\mathcal{C}_i, \mathcal{C}_j$ are interlaced.
So we have a partition of $\{p_1, \dots, p_n\}$ by the entries of chains in the same equivalence class.
It is not hard to check that the entries on each partition have consecutive indices, i.e.
$$\mathcal{E}_i = \{p_{a_i}, p_{a_i + 1}, \dots, p_{b_i -1}, p_{b_i}\}$$
and $\bigcup_{i=0}^m \mathcal{C}_i$ are interlaced iff there is only one equivalence class.

We now prove the proposition. Suppose there exists more than one equivalence class, i.e. we have
$$\mathcal{E}_1 = \{p_1, \dots, p_a\};\ \ \ \mathcal{E}_2 = \{p_{a+1}, \dots, p_b\}$$
for some $1 \leq a < n$. Since the smallest element in any equivalence class
must be the smallest element of a chain, and the largest element in a class must be
the largest element of a chain, we have
$$\mathcal{C}_i = \{\ \dots,\ p_{a}\}\ \ \{p_{a+1},\ \dots \} = \mathcal{C}_j.$$
By the above description of $s \in S_n$, it is obvious that $s \in S_a \times S_{n-a} \subset S_n$,
which does not involve the simple reflection $s_a$.

Conversely, if there is only one equivalence class, we suppose on the contrary that
there exists  some $1\leq a < n$ such that $s \in S_a \times S_{n-a}$. Since $p_a, p_{a+1}$
are in the same equivalence class, then at least one of the following
$$\{p_a, p_{a+1}\},\ \ \ \ \ \{p_a, p_{a+2}\}, \ \ \ \ \ \{p_{a-1}, p_{a+1}\}$$
is in the same chain $\mathcal{C}_i$ for some $0 \leq i \leq m$. By `flipping' $\mathcal{C}_i$ in either case, there must be some $u \leq a < a+1 \leq v$ such $s = \begin{pmatrix} \dots & u & \dots & v & \dots \\ \dots & v & \dots & u & \dots \end{pmatrix}$.
The reduced expression of such $s$ must involve the simple reflection $s_a$, hence we obtain a contradiction.
Therefore, $s$ must involve all simple reflections in its reduced expression.
\end{proof}

\begin{example} \label{eg:interlaced}
Consider the interlaced chain with smallest coordinate $1$
given in Example \ref{eg:spinlowest}:
\begin{align*}
\{10 && && 8\} && && \{6\} && && \{4\} \\
&& \{9 && && 7 && && 5 && && 3 && && 1\}
\end{align*}
Its corresponding irreducible
representation in $SL(9)$ has Langlands parameter $(\lambda',-s\lambda')$,
where $s = \begin{pmatrix}1 & 2 & 3 & 4 & 5 & 6 & 7 & 8 & 9 \\ 3 & 9 & 1 & 8 & 5 & 6 & 7 & 4 & 2 \end{pmatrix}$, and
$\lambda'$ $=$ $[1/2,1/2,1/2,1/2,1/2,1/2,1]$, where $[a_1,\dots,a_m]$ is defined by
$$[a_1,\dots,a_m] := a_1\varpi_1 + \dots, + a_m\varpi_m.$$
In fact, the coordinates of $\lambda'$ is simply obtained by taking the
difference of the neighboring coordinates of $\lambda = \frac{1}{2}(10,9,8,7,6,5,4,3,1)$.

The calculation in Example \ref{eg:spinlowest} implies that the
spin-lowest $K$-type for $J(\lambda',-s\lambda')$ in $SL(8)$ is
$V_{[1,1,1,2,0,1,1,1]}$.
\end{example}

\begin{example} \label{spherical}
We explore the possibilities of chains $\bigcup_{i=0}^m \mathcal{C}_i$
whose corresponding Zhelobenko parameter $(\lambda',-s\lambda')$ gives a spherical representation.

In order for the lowest $K$-type to be trivial,
we need the $\mathcal{T}_i$'s in Algorithm \ref{alg:spinlkt} to have the same
average value $k_i$ for all $i$, that is, the mid-point of all $\mathcal{C}_i$'s (if there
is more than one) must be the same. This leaves the possibility
of $\bigcup_{i=0}^m \mathcal{C}_i$ consisting of a single chain,
which corresponds to the trivial representation, or there
are two chains of lengths $a > b > 0$ whose entries are of
different parity. Hence it must be of the form
$$\{2a-1, 2a-3, \dots, 3, 1\} \cup \{a+(b-1), a+(b-3), \dots, a-(b-3), a-(b-1)\},$$
where $a$, $b$ are of different parity.

In other words, such representations can only occur for $SL(n)$
with $n = a+b$ is odd, and is equal to $Ind_{S(GL(a) \times GL(b))}^{SL(n)}(\mathrm{triv} \otimes \mathrm{triv})$,
which are the unipotent representations corresponding to the nilpotent orbit
with Jordan block $(2^b1^{a-b})$ (see \cite[Section 5.3]{BP}).
Its Langlands parameter $(\lambda',-s\lambda')$ has $2\lambda' = [\underbrace{2,\dots,2}_{(a-b-1)/2},\underbrace{1,\dots,1}_{2b},\underbrace{2,\dots,2}_{(a-b-1)/2}]$ and $s = w_0$ (see \cite[Conjecture 5.6]{DD}).
Moreover, its spin-lowest $K$-type is given by Equation (5.5) of \cite{BP}, which
matches with our calculations in Algorithm \ref{alg:spinlkt}.
\end{example}

For the rest of this section, we give two applications of Proposition \ref{prop:scattered}:
\subsection{The spin-lowest $K$-type is unitarily small}
To offer a unified conjectural description of the unitary dual, Salamanca-Riba and Vogan
formulated the notion of unitarily small (\emph{u-small} for short) $K$-type in \cite{SV}.
Here we only quote it for a complex connected simple Lie group $G$ -- using the setting in the introduction, a $K$-type $V_{\delta}$ is u-small if and only if $\langle \delta-2\rho, \varpi_i\rangle\leq 0$ for $1\leq i\leq \mathrm{rank}(\fg_0)$ (see Theorem 6.7 of \cite{SV}).

\begin{lemma}\label{lemma-u-small}
Let $\lambda=\sum_{i=1}^{\mathrm{rank}(\fg_0)}\lambda_i\varpi_i \in\fh_0^*$ be a dominant weight such that
$\lambda_i=\frac{1}{2}$ or $1$ for each $1\leq i\leq n$, and $V_{\delta}$ be the $K$-type with highest weight $\delta$ such that
$$
\{\delta-\rho\}=2\lambda-\rho.
$$
Then $\langle\delta-2\rho, \varpi_i\rangle\leq 0$, $1\leq i\leq \mathrm{rank}(\fg_0)$. Therefore, the $K$-type $V_{\delta}$ is u-small.
\end{lemma}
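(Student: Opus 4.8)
The plan is to reduce the whole statement to the single inequality $\langle\lambda-\rho,\varpi_i\rangle\le 0$, which is immediate from the hypothesis on $\lambda$. The starting observation is that $2\lambda-\rho=\sum_j(2\lambda_j-1)\varpi_j$ has every coefficient in $\{0,1\}$, so it is already dominant; hence the assumption $\{\delta-\rho\}=2\lambda-\rho$ means precisely that $\delta-\rho=w(2\lambda-\rho)$ for some $w\in W$, i.e. $\delta-\rho$ lies in the Weyl orbit of the dominant weight $2\lambda-\rho$.

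Next I would use a standard pairing inequality: if $\xi$ is dominant, then $\langle\xi,w\eta\rangle$ is maximized, over $w\in W$, at the dominant representative of $W\eta$ (because for dominant $\eta$ the difference $\eta-w\eta$ is a nonnegative combination of simple roots, and a dominant $\xi$ pairs nonnegatively with every simple root). Applying this with $\xi=2\lambda-\rho$ and $\eta=\varpi_i$ gives
\[
\langle\delta-\rho,\varpi_i\rangle=\langle 2\lambda-\rho,\,w^{-1}\varpi_i\rangle\le\langle 2\lambda-\rho,\varpi_i\rangle .
\]
Subtracting $\langle\rho,\varpi_i\rangle$ from both sides yields
\[
\langle\delta-2\rho,\varpi_i\rangle\le\langle 2\lambda-2\rho,\varpi_i\rangle=2\langle\lambda-\rho,\varpi_i\rangle .
\]
Finally, $\lambda-\rho=\sum_j(\lambda_j-1)\varpi_j$ with each $\lambda_j-1\le 0$, so $-(\lambda-\rho)$ is dominant; since two dominant weights always have nonnegative inner product (again because one of them is a nonnegative combination of simple roots, each of which pairs nonnegatively with the other dominant weight), $\langle\lambda-\rho,\varpi_i\rangle\le 0$ for all $i$. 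This gives $\langle\delta-2\rho,\varpi_i\rangle\le 0$ for $1\le i\le\mathrm{rank}(\fg_0)$, and the u-smallness of $V_\delta$ is then exactly the criterion of \cite[Theorem 6.7]{SV} quoted before the lemma.

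I do not anticipate any genuine obstacle here: the argument is purely formal once one checks that $2\lambda-\rho$ is dominant (so that $\delta-\rho$ is literally a Weyl conjugate of it) and invokes the two elementary facts about inner products of dominant weights. If one prefers, the last paragraph can be phrased without ever mentioning $\langle\varpi_i,\varpi_j\rangle\ge 0$, by noting directly that $2\lambda-2\rho$ lies in the closure of the antidominant chamber and hence pairs nonpositively with the dominant weight $\varpi_i$.
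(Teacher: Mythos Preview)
Your proof is correct and follows essentially the same route as the paper's: both write $\delta-\rho$ as a Weyl conjugate of the dominant weight $2\lambda-\rho$, use that $\langle 2\lambda-\rho,\,w^{-1}\varpi_i\rangle\le\langle 2\lambda-\rho,\,\varpi_i\rangle$, and then conclude from $\langle\lambda-\rho,\varpi_i\rangle\le 0$. The only cosmetic difference is that the paper justifies the pairing inequality by quoting an explicit formula (Lemma~5.5 of \cite{DH}) expressing $\varpi_i-w\varpi_i$ as a nonnegative sum of positive roots, whereas you invoke the same fact directly as the standard maximality of the dominant representative.
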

\begin{proof}
By assumption, there exists $w\in W$ such that $\delta=w^{-1}(2\lambda-\rho)+\rho$. Thus
\begin{align*}
\langle \delta-2\rho, \varpi_i\rangle &=\langle w^{-1}(2\lambda-\rho)-\rho, \varpi_i\rangle\\
&=\langle w^{-1}(2\lambda-\rho), \varpi_i\rangle-\langle\rho, \varpi_i\rangle\\
&=\langle 2\lambda-\rho, w(\varpi_i)\rangle -\langle\rho, \varpi_i\rangle.
\end{align*}
On the other hand, let $w=s_{\beta_1}s_{\beta_2}\cdots s_{\beta_p}$ be a reduced decomposition of $w$ into simple root reflections. Then by Lemma 5.5 of \cite{DH},
\begin{equation}
\varpi_i-w(\varpi_i)=\sum_{k=1}^p \langle \varpi_i, \beta_k^\vee \rangle s_{\beta_1}s_{\beta_2}\cdots s_{\beta_{k-1}}(\beta_k).
\end{equation}
Note that $s_{\beta_1}s_{\beta_2}\cdots s_{\beta_{k-1}}(\beta_k)$ is a positive root for each $k$. Now we have that
\begin{align*}
\langle \delta-2\rho, \varpi_i\rangle &=\Big\langle 2\lambda-\rho,\varpi_i- \sum_{k=1}^p \langle \varpi_i, \beta_k^\vee \rangle s_{\beta_1}s_{\beta_2}\cdots s_{\beta_{k-1}}(\beta_k)\Big\rangle -\langle\rho, \varpi_i\rangle\\
&=2\langle\lambda -\rho, \varpi_i\rangle-\sum_{k=1}^p \langle \varpi_i, \beta_k^\vee \rangle \langle 2\lambda-\rho,  s_{\beta_1}s_{\beta_2}\cdots s_{\beta_{k-1}}(\beta_k) \rangle \\
&\leq  2\langle \lambda-\rho, \varpi_i\rangle\\
&\leq 0.
\end{align*}
\end{proof}

\begin{corollary} \label{cor-u-small}
The unique spin-lowest $K$-type $V_{\tau}$ of any scattered representation of $SL(n)$ is u-small. Consequently,
Conjecture C of \cite{DD} holds for $SL(n)$.
\end{corollary}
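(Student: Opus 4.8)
The plan is to deduce Corollary~\ref{cor-u-small} directly from the structural results already established, with Lemma~\ref{lemma-u-small} doing the heavy lifting. First I would invoke Proposition~\ref{prop:scattered}: a scattered representation $\pi = J(\lambda,-s\lambda)$ of $SL(n)$ corresponds, after translation, to a union of interlaced chains $(\lambda',-s\lambda')$ with smallest coordinate $1$. By Theorem~\ref{unitary} and the discussion in Section~\ref{scattered}, such a $\pi$ is parabolically induced from a unitary character, so its spin-lowest $K$-type $V_\tau$ is the one produced by Algorithm~\ref{alg:spinlkt}, and it is the unique spin-lowest $K$-type appearing with multiplicity one (the uniqueness statement comes from \cite{BDW}, quoted in the introduction). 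By Proposition~\ref{prop-spin-lowest}, this $\tau$ satisfies $\{\tau - \rho\} = 2\lambda' - \rho$.

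Next I would check that $\lambda'$ meets the hypotheses of Lemma~\ref{lemma-u-small}. Writing $\lambda' = \sum_{i=1}^{n-1}\lambda_i'\varpi_i$ in the fundamental weight basis for $SL(n)$, the coordinates $\lambda_i'$ are obtained by taking consecutive differences of the (translated) entries of the chains, as noted in Example~\ref{eg:interlaced}; since adjacent entries within a chain differ by $2$ and interlaced entries from distinct chains differ by $1$, every such difference is $\tfrac12$ or $1$. Hence $\lambda'$ is a dominant weight with $\lambda_i' \in \{\tfrac12, 1\}$ for all $i$, exactly as required. Applying Lemma~\ref{lemma-u-small} with this $\lambda'$ and with $\delta = \tau$ (legitimate because $\{\tau-\rho\} = 2\lambda'-\rho$), we conclude $\langle \tau - 2\rho, \varpi_i\rangle \le 0$ for $1 \le i \le n-1$, which by the Salamanca-Riba--Vogan criterion (Theorem~6.7 of \cite{SV}) means $V_\tau$ is u-small. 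This establishes the first assertion.

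For the final clause, Conjecture~C of \cite{DD} asserts precisely that the spin-lowest $K$-types of the scattered representations are u-small; since every scattered representation of $SL(n)$ has been shown to have u-small spin-lowest $K$-type, the conjecture holds for $SL(n)$, and one simply remarks this. I do not expect any genuine obstacle here: the corollary is essentially a bookkeeping assembly of Proposition~\ref{prop:scattered}, Proposition~\ref{prop-spin-lowest}, and Lemma~\ref{lemma-u-small}. The only point requiring a word of care is the passage from $SL(n)$ to the $GL(n)$-style chain parametrization and back, i.e.\ confirming that the translation sending $\lambda$ to $\lambda'$ does not disturb the conclusion --- but u-smallness is a condition on the $SL(n)$ weight $\delta = \tau$ read through the $SL(n)$ fundamental weights, and that is exactly the form in which Lemma~\ref{lemma-u-small} is stated, so no adjustment is needed.
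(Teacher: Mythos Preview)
Your proposal is correct and follows essentially the same route as the paper: invoke Proposition~\ref{prop:scattered} to see that the chain description forces each fundamental-weight coefficient $\lambda_i$ to be $\tfrac12$ or $1$, use Proposition~\ref{prop-spin-lowest} to get $\{\tau-\rho\}=2\lambda-\rho$, and then apply Lemma~\ref{lemma-u-small}. The paper's proof is the same argument, only more tersely stated and without your extra remarks on uniqueness and the $GL(n)$-to-$SL(n)$ translation.
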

\begin{proof}
Let $(\lambda, -s\lambda)$ be the Zhelobenko parameter for a scattered representation of $SL(n)$. Write $\lambda=\sum_{i=1}^{n-1} \lambda_i \varpi_i$ in terms of the fundamental weights. Then it is direct from our definition of the interlaced chains that each $\lambda_i$ is either $\frac{1}{2}$ or $1$ (recall Proposition \ref{prop:scattered} and Example \ref{eg:interlaced}). Let $V_{\tau}$ be the unique spin-lowest $K$-type of the scattered representation. Then $\{\tau-\rho\}=2\lambda-\rho$ (see Proposition \ref{prop-spin-lowest}). Thus the result follows from Lemma \ref{lemma-u-small}.
\end{proof}

\subsection{Number of scattered representations}
As another application of Proposition \ref{prop:scattered}, we compute the number of
scattered representations of $SL(n)$. By the proposition, it is equal to the number
of interlaced chains with $n$ entries with the smallest entry equal to $1$.
We now give an algorithm of constructing new interlaced chains with smallest
coordinate equal to $1$ from those with one less coordinate:

\begin{algorithm} \label{interlaced}
Let $\displaystyle \bigcup_{i=1}^p \{2A_i-1,\dots, 2a_i-1\} \cup \bigcup_{j=1}^q \{2B_j, \dots, 2b_j\}$ be a union of interlaced chains with
such that
\begin{itemize}
\item $A_{i'} > A_i$ if $i' > i$, and $B_{j'} > B_j$ if $j' > j$; and\\
\item $2a_p - 1 = 1$
\end{itemize}
We construct two new interlaced chains with one extra coordinate as follows. (When $q=0$, we adopt CASE I only.)\\

\noindent{\bf CASE I:} If $2A_p -1 > 2B_q + 1$, then the two new interlaced chains are
\begin{align*}
&& && && && \{2B_q&& && \dots && && 2b_q\} && && \dots \\
\{{\bf 2A_p+1} && 2A_p-1&& && \dots && && 2a_p-1\} && &&  \dots
\end{align*}
and
\begin{align*}
&& \{{\bf 2A_p-2}\} && && \{2B_q&& && \dots && && 2b_q\} && && \dots \\
\{2A_p-1&& \dots && \dots && && 2a_p-1\} && &&  \dots
\end{align*}

\noindent{\bf CASE II:} If $2A_p-1 = 2B_q+1$, then the two new interlaced chains are
\begin{align*}
&& && \{2B_q&& && \dots && && 2b_q\} && && \dots \\
\{{\bf 2A_p+1} && 2A_p-1&& && \dots && && 2a_p-1\} && &&  \dots
\end{align*}
and
\begin{align*}
\{{\bf 2B_q+2} && 2B_q&& && \dots && && 2b_q\} && && \dots \\
& \{2A_p-1 && && \dots && && 2a_p-1\} && &&  \dots
\end{align*}

\noindent{\bf CASE III:} If $2A_p-1 = 2B_q - 1$, then the two new interlaced chains are
\begin{align*}
& \{2B_q && && \dots && && 2b_q\} && &&  \dots \\
\{{\bf 2A_p+1} && 2A_p-1&& && \dots && && 2a_p-1\} && && \dots
\end{align*}
and
\begin{align*}
\{{\bf 2B_q + 2} && 2B_q&& && \dots && && 2b_q\} && &&  \dots \\
&& && \{2A_p-1&& && \dots && && 2a_p-1\} && && \dots
\end{align*}

\noindent{\bf CASE IV:} If $2A_p-1 < 2B_q - 1$, then the two new interlaced chains are
\begin{align*}
\{2B_q&& \dots && \dots && && 2B_q\} && &&  \dots \\
&& \{{\bf 2B_q-1}\} && && \{2A_p-1&& && \dots && && 2a_p-1\} && && \dots
\end{align*}
and
\begin{align*}
\{{\bf 2B_q+2} && 2B_q&& && \dots && && 2b_p\} && &&  \dots \\
&& && && && \{2A_p-1&& && \dots && && 2a_p-1\} && && \dots
\end{align*}

\end{algorithm}

\begin{example}
Suppose we begin with an interlaced chain $\{9,7,5,3,1\} \cup \{4,2\}$. Then the
new interlaced chains with one extra coordinate are
$$\{11,9,7,5,3,1\} \cup \{4,2\} \ \  \text{and}\ \ \{9,7,5,3,1\} \cup \{8\} \cup \{4,2\}.$$
\end{example}

\begin{proposition}
All interlaced chains with $n \geq 2$ entries with smallest coordinate equal to $1$
can be obtained uniquely from the chain $\{3\ 1\}$ by inductively applying
the above algorithm.
\end{proposition}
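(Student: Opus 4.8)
The plan is to argue by induction on the number $n$ of entries. For $n=2$ the only interlaced chain with smallest coordinate $1$ is $\{3\ 1\}$: two single-entry chains are never linked, since $A>B>a$ and $B>A>b$ both fail once $A=a$ and $B=b$, so the configuration must be a single chain $\{c,c-2\}$, and $c-2=1$ forces $c=3$. This is the base case.

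The inductive step reduces to a single assertion: Algorithm~\ref{interlaced} sets up a bijection between the set of pairs $(\mathcal{C},\varepsilon)$ --- with $\mathcal{C}$ an interlaced chain with $n$ entries and smallest coordinate $1$, and $\varepsilon$ recording which of the two outputs is taken --- and the set of interlaced chains with $n+1$ entries and smallest coordinate $1$. Granting this, the inductive hypothesis (each $n$-entry interlaced chain with smallest coordinate $1$ is produced from $\{3\ 1\}$ along a unique sequence of steps) carries over at once to $n+1$ entries. First I would verify the map is well-defined: in each of CASES~I--IV one checks directly that the two displayed configurations are honest unions of chains, are still interlaced (the newly inserted or extended chain links to a neighbouring chain, and the rest of the link graph is unchanged), have exactly one more entry, and retain smallest coordinate $1$; the boundary situations --- $q=0$ (only CASE~I), even chains of length $1$, and the chain containing $1$ having length $1$ or $2$ --- should be recorded here.

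The substance is the construction of the inverse. Given an interlaced configuration $\mathcal{D}$ with $n+1\ge3$ entries and smallest coordinate $1$, I would first locate the (odd-valued) chain containing $1$, say $\{2A-1,\dots,3,1\}$, then inspect the local structure of $\mathcal{D}$ near the top of that chain together with the tops and bottoms of the adjacent chains, and match it against the eight output-templates. Concretely, the entry ``last added'' to produce $\mathcal{D}$ is forced to be exactly one of: the top of the chain containing $1$ (first outputs of I, II, III); a length-one chain lying just below that top, respectively just below the top of the adjacent even chain (second output of I, respectively first output of IV); or the top of the relevant even chain (second outputs of II, III, IV). Deleting that entry yields an $n$-entry configuration $\mathcal{C}$; one then checks that $\mathcal{C}$ is interlaced with smallest coordinate $1$ and satisfies the hypotheses of Algorithm~\ref{interlaced}, and that re-running the identified case on $\mathcal{C}$ returns $\mathcal{D}$. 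Mutual exclusivity of the eight possibilities --- hence uniqueness both of $\mathcal{C}$ and of the step $\varepsilon$ --- comes down to comparing the defining inequalities $2A-1\gtrless 2B\pm1$ (with $2B$ the top of the adjacent even chain) and recording whether the extra entry extends a chain or sits alone.

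I expect the main obstacle to be precisely this reverse case analysis: one must be confident that, read backwards, the four cases are \emph{jointly exhaustive} --- every interlaced $\mathcal{D}$ with $n+1$ entries and smallest coordinate $1$ genuinely decomposes as one of the outputs --- and \emph{mutually exclusive}, with all the degenerate configurations (isolated singleton chains, chains of length $1$ or $2$, the $q=0$ situation, and configurations carrying an extra isolated odd-valued chain produced by CASE~IV) accounted for cleanly. The remaining ingredients --- the base case, the inductive bookkeeping, and checking that each output of Algorithm~\ref{interlaced} is a legitimate interlaced chain --- are routine. As a byproduct, since each step offers exactly two outputs, the bijection shows there are $2^{n-2}$ interlaced chains with $n$ entries and smallest coordinate $1$.
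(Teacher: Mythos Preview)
Your overall architecture --- induction on $n$, with the inductive step amounting to a bijection set up by the algorithm --- matches the paper's. The gap is in your construction of the inverse. You propose to locate the chain containing $1$ and inspect near its top, but the entry added by the algorithm sits near the \emph{global} maximum of the configuration, which need not lie in the chain containing $1$. Take $\{9,7,5\}\cup\{6,4,2\}\cup\{3,1\}$: the chain containing $1$ is $\{3,1\}$ with top $3$, yet this configuration arises from $\{7,5\}\cup\{6,4,2\}\cup\{3,1\}$ by adjoining the entry $9$ (first output of CASE~II), nowhere near $3$. Your list of candidate ``last added'' entries is therefore not exhaustive, and the reverse case analysis you correctly flag as the main obstacle does indeed fail. (The clause ``$2a_p-1=1$'' in the algorithm's hypotheses is what misled you; the paper's own argument ignores it and works with the global maximum throughout.)

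The paper's inverse is simpler and correct: if $M$ is the largest coordinate, remove the singleton chain $\{M-1\}$ when one is present, and otherwise remove $M$ itself. One checks directly that the reduced configuration is still interlaced with smallest entry $1$, and that reapplying the algorithm recovers the original. Uniqueness is then a short contradiction: if two distinct $n$-entry configurations produced the same $(n+1)$-entry output, one would have to arise by deleting the largest odd entry and the other by deleting the largest even entry, and one verifies that neither of the two outputs of the algorithm applied to the latter configuration restores the target.
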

\begin{proof}
Suppose $\bigcup_{i=0}^m \mathcal{C}_i$ be interlaced chains with largest coordinate equal to $M \in \mathcal{C}_0$.
We remove a coordinate from it by the following rule: If $\mathcal{C}_i \neq \{M-1\}$ for all $i$, remove the
entry $M$ from $\mathcal{C}_0$. Otherwise, remove the whole chain $\{M-1\}$ from the original interlaced chains.

One can easily check from the definition of interlaced chain that the reduced chains
are still interlaced, and one can recover the original chain by applying Algorithm \ref{interlaced} on the reduced chain.

Therefore, for all interlaced chains with smallest entry $1$, we can use the reduction mentioned in the first paragraph
repeatedly to get an interlaced chain with only $2$ entries, which must be of the form $\{3\ 1\}$, and
repeated applications of Algorithm \ref{interlaced} on $\{3\ 1\}$ will retrieve the original interlaced chains
(along with other chains). In other words, all interlaced chains with smallest entry $1$ can be
obtained by Algorithm \ref{interlaced} inductively on $\{3\ 1\}$.

\smallskip

We are left to show that all interlaced chains are uniquely constructed using the algorithm -- Suppose on the contrary that
there are two different interlaced chains that give rise to the same $\bigcup_{i=0}^m \mathcal{C}_i$
after applying Algorithm \ref{interlaced}. By the algorithm, these two chains must be obtained from
$\bigcup_{i=0}^m \mathcal{C}_i$ by removing its largest odd entry $M_o \in \mathcal{C}_p$ or largest even entry $M_e \in \mathcal{C}_q$.
So they must be equal to
$$\bigcup_{i \neq p} \mathcal{C}_i  \cup (\mathcal{C}_p \backslash \{M_o\})\ \ \ \text{and}\ \ \ \bigcup_{i \neq q} \mathcal{C}_i  \cup (\mathcal{C}_q \backslash \{M_e\})$$
respectively.

Assume $M_o > M_e$ for now (and the proof for $M_e > M_o$ is similar). By applying
Algorithm \ref{interlaced} to $\bigcup_{i \neq q} \mathcal{C}_i  \cup (\mathcal{C}_q \backslash \{M_e\})$,
we obtain two interlaced chains
$$\bigcup_{i \neq p,q} \mathcal{C}_i  \cup \mathcal{C}_p' \cup (\mathcal{C}_q \backslash \{M_e\})\ \ \ \text{and}\ \ \ \bigcup_{i \neq q} \mathcal{C}_i  \cup (\mathcal{C}_q \backslash \{M_e\}) \cup \{M_o -1\},$$
where $\mathcal{C}_p' := \{M_o +2, \overbrace{M_o, \dots, m_o}^{\mathcal{C}_p}\}$.
Note that none of the above gives rise to the interlaced chains $\bigcup_{i=0}^m \mathcal{C}_i$:
Even in the case when $M_0 -1 = M_e$, $(\mathcal{C}_q \backslash \{M_e\}) \cup \{M_o -1\}$ and
$\mathcal{C}_q$ are different -- although they have the same coordinates, the first consists of two
chains while the second consists of one chain only. So we have a contradiction, and the result follows.
\end{proof}

\begin{corollary}\label{cor-number}
The number of interlaced chains with $n$ coordinates
and the smallest coordinate equal to $1$ is equal to $2^{n-2}$.
\end{corollary}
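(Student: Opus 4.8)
The plan is to set up a simple recursion and solve it. For $n\ge 2$ let $a_n$ denote the number of interlaced chains with $n$ coordinates whose smallest coordinate equals $1$; we want $a_n=2^{n-2}$.

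First I would nail down the base case. Any interlaced chain with exactly two entries and smallest entry $1$ must be a \emph{single} chain (two singleton chains are never linked, hence never interlaced), and a length-$2$ chain with smallest entry $1$ is forced to be $\{3\ 1\}$. Thus $a_2=1=2^{0}$.

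Next, the inductive step comes directly from the preceding proposition. That result says every interlaced chain with $n$ coordinates and smallest coordinate $1$ is obtained from a \emph{unique} interlaced chain with $n-1$ coordinates (and smallest coordinate $1$) by a single application of Algorithm \ref{interlaced}, and conversely each such application of the algorithm produces exactly two interlaced chains with one extra coordinate. In other words, the algorithm defines a $2$-to-$1$ correspondence: grouping the $n$-coordinate interlaced chains according to which $(n-1)$-coordinate chain they descend from partitions them into blocks, each block having size exactly $2$ (one needs only to observe, case by case in Algorithm \ref{interlaced}, that the two output chains are always distinct — e.g.\ in CASE II the first output has the new coordinate attached to the odd chain while the second attaches it to the even chain, so the multiset of chains differs). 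Therefore $a_n=2\,a_{n-1}$.

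Combining the base case with the recursion gives $a_n=2^{n-2}$ for all $n\ge 2$ by induction. By Proposition \ref{prop:scattered} this is exactly the number of scattered representations of $SL(n)$, which also confirms Conjecture 5.2 of \cite{D2}. The only point requiring a little care is the verification that the two chains output by Algorithm \ref{interlaced} are genuinely different in all four cases; once the uniqueness half of the preceding proposition is granted, everything else is a one-line induction.
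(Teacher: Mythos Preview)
Your proposal is correct and is exactly the argument the paper intends: the corollary is stated in the paper without proof, as an immediate consequence of the preceding proposition, and your write-up simply makes that inference explicit by setting up the recursion $a_n=2a_{n-1}$ with $a_2=1$. The only point you flag as needing care---that the two outputs of Algorithm \ref{interlaced} are genuinely distinct in each of the four cases---is indeed routine (the largest coordinate or the chain structure differs in every case), so there is nothing to add.
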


Since the scattered representations of $SL(n+1)$ are in one to one correspondence
with interlaced chains with $n+1$ coordinates having smallest coordinate $1$, this corollary implies that the number of scattered representations of Type $A_n$ is equal to
$2^{n-1}$. This verifies Conjecture 5.2 of \cite{D2}. Moreover, by using \texttt{atlas},
the spin-lowest $K$-types for all scattered representations
of $SL(n)$ with $n \leq 6$ are given in Tables 1--3 of \cite{D2}. One can
easily check the results there match with our $V_{\tau}$ in Algorithm \ref{alg:spinlkt}.

\begin{example}\label{exam-scattered-small-rank}
Let us start from $SL(2, \bC)$ and the chain $\{3\quad 1\}$. This chain corresponds to the trivial representation.

Now we consider $SL(3, \bC)$. By Algorithm  \ref{interlaced}, the chain $\{3\quad 1\}$ for $SL(2)$   produces two chains

$$\{5\quad 3 \quad 1\} \qquad\qquad\qquad\qquad \begin{aligned}
 \{ &2 \} \\
 \{ 3 \ \ \ &\ \ \ \ 1\}
\end{aligned}.$$

The first corresponds to the trivial representation, while the second gives the representation with $\lambda=[\frac{1}{2}, \frac{1}{2}]$ and $s = \begin{pmatrix}1 & 2 & 3 \\ 3 & 2 & 1 \end{pmatrix}$. One computes by Algorithm \ref{alg:spinlkt} that the spin-lowest $K$-type $\tau=[1, 1]$.

\smallskip

Now let us consider $SL(4)$. By Algorithm  \ref{interlaced}, the chain $\{5\quad 3\quad 1\}$ for $SL(3)$ produces two chains
$$\{7\quad 5\quad 3 \quad 1\} \ \qquad\qquad\qquad\qquad
\begin{aligned}
 \{ &4 \}  \\
\{5 \ \ \ &\ \ \ \ 3 \ \ \ \ \ \ \ 1\}
\end{aligned}.$$
The first chain corresponds to the trivial representation, while the second one gives the representation with  $\lambda=[\frac{1}{2}, \frac{1}{2}, 1]$ and $s = \begin{pmatrix}1 & 2 & 3 & 4 \\ 4 & 2 & 3 & 1 \end{pmatrix}$. One computes by Algorithm \ref{alg:spinlkt} that the spin-lowest $K$-type $\tau=[2, 0, 1]$. The other chain of $SL(3)$ shall produce
\begin{align*}
  \{ &2 \}  &\ \qquad \{4 \ \ \ \ &\ \ \ 2\}\\
\{5 \ \ \ \ \ \ \ 3 \ \ \ &\ \ \ \ 1\}      &\ \qquad         \{&3 \ \ \ \ \ \ \ 1\}
\end{align*}
One computes that $\lambda=[1, \frac{1}{2}, \frac{1}{2}]$, $s = \begin{pmatrix}1 & 2 & 3 & 4 \\ 4 & 2 & 3 & 1 \end{pmatrix}$,  $\tau=[1, 0, 2]$; and that $\lambda=[\frac{1}{2}, \frac{1}{2}, \frac{1}{2}]$, $s = \begin{pmatrix}1 & 2 & 3 & 4 \\ 3 & 4 & 1 & 2 \end{pmatrix}$,  $\tau=[1, 1, 1]$, respectively. These four representations (and their spin-lowest $K$-types) match precisely with Table 1 of \cite{D2}.
\end{example}

\medskip

\centerline{\scshape Acknowledgements}
We thank the referee sincerely for very careful reading and nice suggestions.

\medskip

\centerline{\scshape Funding}
Dong was supported by the National Natural Science Foundation of China (grant 11571097, 2016--2019). Wong is supported by the National Natural Science Foundation of China (grant 11901491) and the Presidential Fund of CUHK(SZ).

\end{document}